\documentclass[preprint]{elsarticle}
\usepackage{./stile2017}


\bibliographystyle{elsarticle-num-names}








\newcommand{\p}{p}  
\newcommand{\field}{\mbb F_\p}
\newcommand{\plane}{\field^2}

\newcommand{\puno}{\field\cup\{\infty\}}

\newcommand{\D}{D}
\newcommand{\Drich}{E}
\newcommand{\Dpoor}{D-E}
\newcommand{\rich}{W}
\newcommand{\richm}{{w_m}}
\newcommand{\sumrichm}{z_m}
\newcommand{\sumrich}{Z}

\newcommand{\DD}{D^\ast}
\newcommand{\DDrich}{E^\ast}
\newcommand{\uD}{D^\dagger}
\newcommand{\uDrich}{E^\dagger}

\newcommand{\Lm}{L^{(m)}}
\newcommand{\ellm}{\ell^{(m)}}

\newcommand{\RS}{H_{U,n}}

\newcommand{\HH}{\widetilde H}
\newcommand{\hg}{\widetilde g}


\begin{document}
	
	\begin{frontmatter}
		
		\title{On rich and poor directions determined by a subset of a finite plane}
		\author{Luca Ghidelli}\fnref{luca}
		\ead{luca.ghidelli@uottawa.ca}
		\address{Department of Mathematics and Statistics, University of Ottawa, Canada}
		\date{\today}

		\begin{abstract}
			We generalize to sets with cardinality more than $p$ a theorem of R\'edei and Sz\H{o}nyi on the number of directions determined by a subset $U$ of the finite plane $\mathbb F_p^2$. 
			A $U$-rich line is a line that meets $U$ in at least $\#U/p+1$ points, while a $U$-poor line is one that meets $U$ in at most $\#U/p-1$ points. The slopes of the $U$-rich and $U$-poor lines are called $U$-special directions. We show that either $U$ is contained in the union of $n=\lceil\#U/p\rceil$ lines, or it determines ``many'' $U$-special directions. 
			The core of our proof is a version of the polynomial method in which we study iterated partial derivatives of the R\'edei polynomial to take into account the ``multiplicity'' of the directions determined by $U$. 
		\end{abstract}
		
		\begin{keyword}
			\texttt    finite incidence geometry\sep polynomial method \sep affine Galois plane;\  special directions \sep lacunary polynomials \sep multiplicities 
			\MSC[2010] Primary:\ 05B25\sep 51A15 \sep 51E15;\ Secondary:\ 05A20\sep 11T06\sep 52C30 
		\end{keyword}
		
	\end{frontmatter}

\tableofcontents

\section{Introduction}\label{sec:intro}

In this paper we fix a prime number $p$ and we study the (affine, finite) Galois plane $\plane$ from the point of view of Incidence Geometry.  
Our aim is to provide an extension to a theorem of R\'edei and Sz\H{o}nyi and to stimulate some research on the ``polynomial method'' in situations where ``multiplicities'' are allowed. 
Our main result and the R\'edei-Sz\H{o}nyi theorem pertain to the intersections between a fixed set $U\subseteq \plane$ and all the lines $\ell\subseteq \plane$ considered ``up to parallelism'', as follows. 
For every straight line in the finite plane $\plane$ we may consider its  \emph{direction} (or ``slope'') so that parallel lines have the same direction. 
A set $U\subseteq \plane$ determines a direction $m$ if there exists a line $\ell\subseteq\plane$ with slope $m$ passing through two distinct points of $U$. 
Notice that there are $p+1$ possible directions, naturally identified with the elements of $\puno$. 
The following theorem is well-known: 
\begin{theorem}[R\'edei-Sz\H{o}nyi]\label{thm:main:rs}
Let $U\subseteq\plane$ with $\#U\leq p$. 
If $U$ is not contained in the support a line, then it determines at least $\left\lceil \frac{\#U+3}{2}\right\rceil$ directions.
\end{theorem}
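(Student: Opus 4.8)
The plan is to carry out the Rédei polynomial incarnation of the polynomial method, in the style of Rédei and Szőnyi, and to reduce the statement to a lacunary polynomial phenomenon.

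\emph{Reductions and the Rédei polynomial.} Write $n=\#U$ and let $D\subseteq\puno$ be the set of directions determined by $U$; the task is to show $\#D\ge\lceil (n+3)/2\rceil$ whenever $U$ is not collinear. If $D=\puno$ then $\#D=\p+1\ge\lceil(n+3)/2\rceil$ since $n\le\p$, so I may assume some direction is undetermined; as linear changes of coordinates act transitively on the $\p+1$ directions, after such a change I may assume $\infty\notin D$ --- equivalently the first coordinates of the points of $U$ are pairwise distinct --- and after a translation that $(0,0)\in U$. A set of at least two points determining only one direction is collinear, so I may also assume $m:=\#D\ge2$; the goal becomes to contradict $2\le m\le\lceil(n+1)/2\rceil$. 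Now put
\[
R(X,Y)=\prod_{(a,b)\in U}\bigl(X+aY-b\bigr)\in\field[X,Y],
\]
monic of degree $n$ in $X$, of total degree $n$, and divisible by $X$. For $y\in\field$ the multiplicity of a root $c$ of $R(X,y)$ is the number of points of $U$ on the line of slope $y$ with intercept $c$; so $y$ is determined precisely when $R(X,y)$ has a repeated factor, and since $R(X,y)$ splits into linear factors over $\field$ this happens precisely when $R(X,y)$ does not divide $X^{\p}-X$. Let $N\subseteq\field$ be the set of undetermined finite directions, of size $\p-m$, and let $P(Y)=\prod_{y\in N}(Y-y)$, of degree $\p-m$.

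\emph{Extracting a gap.} Since $R$ is monic in $X$, divide with remainder in $\field[Y][X]$:
\[
X^{\p}-X=R(X,Y)\,R^{\ast}(X,Y)+\varrho(X,Y),\qquad \deg_X\varrho<n,\quad \deg_XR^{\ast}=\p-n.
\]
Assigning weight $1$ to both $X$ and $Y$, the polynomial $R$ has weighted degree $n$ (each factor has weighted degree $1$) and $X^{\p}-X$ has weighted degree $\p$; tracking the long division then shows $R^{\ast}$ has weighted degree at most $\p-n$, so the coefficient of $X^{j}$ in $\varrho$ has $Y$-degree at most $\p-j$. On the other hand $\varrho(X,y)=0$ for each $y\in N$, so every coefficient of $\varrho$ is divisible by $P(Y)$; comparing the degree bounds forces the coefficient of $X^{j}$ to vanish whenever $j>m$. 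Hence $\varrho$ is supported in $X$-degrees $\le m$ and
\[
R(X,Y)\,R^{\ast}(X,Y)=X^{\p}-X-\varrho(X,Y)
\]
is a polynomial of $X$-degree $\p$ exhibiting a long \emph{gap}: no monomial $X^{j}$ with $m<j<\p$ occurs. Moreover $\varrho\not\equiv0$, since otherwise $R(X,y)\mid X^{\p}-X$ for all $y$, forcing $D\subseteq\{\infty\}$, which is impossible; and for $y\in N$ the polynomials $R(X,y)$ and $R^{\ast}(X,y)$ are coprime, each squarefree and a product of distinct linear factors over $\field$, with product $X^{\p}-X$.

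\emph{The crux.} It remains to show that so long a gap is incompatible with the factors being split for so many $y$, unless the factorisation degenerates; this is where I invoke Rédei's theory of lacunary polynomials over $\field$ (Rédei; Szőnyi). Since $m\le\lceil(n+1)/2\rceil$, the gap has length $\p-1-m$, which is too long for $R(X,y)R^{\ast}(X,y)$ to be squarefree and split for the $\p-m$ values $y\in N$ unless either $R$ or $R^{\ast}$ is a $\p$-th power --- excluded, as their $X$-degrees are positive and below $\p$ --- or $R(X,Y)$ is, after the normalisation already made, homogeneous of degree $n$ in $X$ and in a single linear form $Y-\mu$. In the latter case each factor $X+aY-b$ of $R$ is homogeneous in $(X,Y-\mu)$, i.e.\ $b=a\mu$ for every $(a,b)\in U$; thus $U$ lies on the line $Y=\mu X$, whence $\#D=1$, contradicting $m\ge2$. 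Therefore $\#D\ge\lceil(n+3)/2\rceil$.

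\emph{The main obstacle.} The reductions and the extraction of the gap are bookkeeping; essentially all the content sits in the last step. Two points need genuine care: proving that a gap of length $\p-1-m$ forces degeneracy exactly at the threshold $m<\lceil(n+3)/2\rceil$ --- fixing this additive constant, which is what produces the ``$+3$'' and the ceiling and is sensitive to the lowest-degree coefficients of $R$ (equivalently to the point $(0,0)\in U$) --- and upgrading the information available for each specialisation $Y=y$ into a statement about the two-variable polynomial $R(X,Y)$ itself, which is what yields collinearity rather than merely that each $R(X,y)$ is exceptional.
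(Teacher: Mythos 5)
Your skeleton — the R\'edei polynomial, the division of $x^p-x$ by it to produce the Sz\H{o}nyi complement, and the lacunarity of the product forced by the undetermined directions — is exactly the route the paper takes (it recovers this statement as the $n=1$ case of its main theorem, via the RS-polynomial of \cref{def:RS} and the lacunary result \cref{prop:lacunary}). But the proof stops precisely where the content is, and you say so yourself in the ``main obstacle'' paragraph. The crux is invoked as a black box in a form that R\'edei's lacunary theory does not supply. First, the tension is misplaced: for $y\in N$ the product $R(X,y)R^{\ast}(X,y)$ equals $X^p-X$, which is split and squarefree, so nothing is ``too long'' there; the contradiction must come from specializing at a \emph{determined} finite direction $y_0$, where $R(X,y_0)$ has a repeated linear factor while $X^p-X-\varrho(X,y_0)$ is lacunary with the completely reducible factor $R(X,y_0)$ and the small cofactor $R^{\ast}(X,y_0)$. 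Second, the correct conclusion of the lacunary theorem (\cref{prop:lacunary} with $n=1$) at such a $y_0$ is that the specialized polynomial is $x^p-x$ (contradicting that $y_0$ is determined) or $(x-\alpha)^p$ (which puts all $\#U\le p$ points of $U$ on a single line of slope $y_0$, i.e.\ collinearity). The $p$-th power case is thus exactly the degenerate case you need, not something ``excluded as the $X$-degrees are positive and below $p$'', and no upgrade to a two-variable statement (your claimed homogeneity of $R(X,Y)$ in $X$ and $Y-\mu$) is needed or available — that trichotomy is not a theorem in the literature you cite, and you give no proof of it.

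Third, the quantitative threshold is not reached by your bookkeeping. With $\deg_X\varrho\le m$ and cofactor degree $\deg R^{\ast}=p-\#U$, the R\'edei--Sz\H{o}nyi criterion (the condition $A+B+1+\deg S<p$ in \cref{prop:lacunary}) only applies when roughly $2m+(p-\#U)<p$, i.e.\ $m\lesssim(\#U-1)/2$, which falls short of contradicting $m\le\lceil(\#U+1)/2\rceil$ and hence of producing the ``$+3$''. Hitting the sharp constant requires the finer coefficient bounds tied to the number of determined (rich) directions and Megyesi's normalization in which $\infty$ \emph{is} a determined direction before specializing at another one — compare \cref{RS:lacunary}, the bounds in \cref{end:G:AB}, and \cref{rmk:megyesi} — none of which appears in your argument. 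So as written this is a correct setup followed by an unproved (and partly misstated) key step; to complete it you should state and prove the one-variable lacunary theorem at the needed threshold and apply it at a determined direction as above, which is precisely what the paper's Sections~3--4 do.
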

\Cref{thm:main:rs} was proved in a seminal monograph \cite[Par.~36]{redei} by R\'edei, with a contribution of Megyesi, in the case $\#U=p$. It was later extended to $\#U\leq p$ by Sz\H{o}nyi, see \cite[Remark~5]{szonyi} and \cite[Sec.~3 and 5]{szonyi:around}. 
Our objective is to obtain an analogous statement for $\#U>p$ as well. 
Since for every given direction $m\in\puno$ there are only $p$ parallel lines with slope $m$, it is clear by a Pigeonhole argument that any set $U\subseteq \plane$ with $\#U>p$ determines all directions of the plane. 
However, we propose the following natural definitions: 
\begin{definition}\label{def:dir}
Let $U\subseteq\plane$ and let $\theta:=\#U/p$. 
We say that a line $\ell$ is $U$-rich if it meets $U$ in at least $\theta+1$ points, and that $\ell$ is $U$-poor if instead  $\#(\ell \cap U)\leq \theta-1$. 
We say that a direction $m\in\puno$ is \emph{$U$-special} if there is a line $\ell$ with slope $m$ that is either $U$-rich or $U$-poor. 
\end{definition}
Notice that, when $\theta\leq 1$, a $U$-special direction is nothing but a direction determined by $U$. 
Our main result reads as follows
\begin{theorem}\label{thm:main}
Let $U\subseteq\plane$ with $\# U = np-r$, for some $1\leq n\leq p$ and $0\leq r <p$. 
Then there are at least $\left\lceil \frac{p+n+2-r}{n+1}\right\rceil$ $U$-special directions, if $U$ is not contained in the union of $n$ lines. 
\end{theorem}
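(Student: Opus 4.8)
\medskip
\noindent\emph{Sketch of the proof.} The plan is to adapt the R\'edei-polynomial method underlying \Cref{thm:main:rs}, upgrading it so as to record intersection \emph{multiplicities} by means of Hasse (divided) derivatives. Enumerate $U=\{(a_i,b_i):1\le i\le N\}$ with $N=\#U=n\p-r$ and form
\[
R(X,Y):=\prod_{i=1}^{N}\bigl(X+a_iY-b_i\bigr)=\prod_{a\in\field}F_a(X+aY),\qquad F_a(Z):=\prod_{b:(a,b)\in U}(Z-b),
\]
which is monic of degree $N$ in $X$ and whose coefficient of $X^{N-j}$ has $Y$-degree at most $j$. The classical dictionary is that, for $m\in\field$, the multiplicity of $c\in\field$ as a zero of $R(X,m)$ equals $\#(\ell\cap U)$ for the line $\ell\colon y=mx+c$, while the direction $\infty$ is governed by the degrees of the $F_a$'s, i.e.\ by the intersections of $U$ with the vertical lines. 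Hence, by \Cref{def:dir}, a finite direction $m$ fails to be $U$-special precisely when every zero multiplicity of $R(X,m)$ lies in $\{n-1,n\}$ (in $\{n\}$ when $r=0$); in that case $R(X,m)$ divides $(X^{\p}-X)^n$ with quotient of degree $r$, and a $U$-rich line of slope $m$ is detected by the zeros surviving in the iterated derivative $D_X^{(n)}R(X,m)$ --- this is how the ``multiplicity of the directions'' is carried through the argument.

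Next I would introduce the residual polynomial $\RS=\RS(X,Y)$ by the division
\[
(X^{\p}-X)^n=R(X,Y)\,\RS(X,Y)+S(X,Y)\quad\text{in }\field[Y][X],\qquad \deg_XS<N,\ \ \deg_X\RS=r,
\]
and prove, by an induction on coefficients using the $Y$-degree bound above, that $\deg_Y\RS\le r$ and that the coefficient of $X^{N-k}$ in $S$ has $Y$-degree at most $r+k$. By the previous paragraph $S(X,m)=0$ at every non-$U$-special $m\in\field$, of which there are at least $\p-s$, where $s$ is the number of $U$-special directions; so each sufficiently low-degree coefficient of $S$ vanishes identically, forcing
\[
\deg_XS\le(n-1)\p+s .
\]
(For $n=1$ this is R\'edei's lacunarity lemma.) In other words, the product of the $N$ linear forms $X+a_iY-b_i$ together with the degree-$r$ factor $\RS$ reproduces $(X^{\p}-X)^n$ up to an error of $X$-degree at most $(n-1)\p+s$.

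The heart of the proof is then a lacunary-polynomial theorem \emph{with multiplicities}: if a product $\prod_c(X-c)^{m_c}$ of linear factors of multiplicity $\le n$ --- namely $R(X,m)$ for $m$ non-$U$-special --- divides $(X^{\p}-X)^n$ and differs from it by a polynomial of $X$-degree at most $(n-1)\p+s$, then \emph{either} $s\ge\bigl\lceil\tfrac{\p+n+2-r}{n+1}\bigr\rceil$ \emph{or} $R$ (equivalently $\RS$ and $S$) has a rigidly constrained shape. For $n=1$, $r=0$ this is precisely the classical fact that a fully reducible polynomial $X^{\p}-X-g(X)$ with $g\ne0$ has $\deg g\ge(\p+1)/2$; for general $n$ the denominator $n+1$ should emerge from the $n+1$ ``layers'' $1,(X^{\p}-X),\dots,(X^{\p}-X)^n$ that a multiplicity-$\le n$ product of linear forms can occupy, made effective through the chain $D_X^{(0)}R,D_X^{(1)}R,\dots,D_X^{(n)}R$ of Hasse derivatives (a higher-derivative, Wronskian-type estimate), whose specialisations at non-$U$-special $m$ are again $(X^{\p}-X)$-power multiples of polynomials of degree $\le r$. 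Extracting the \emph{sharp} constant $n+1$, and keeping $R$, $\RS$ and $S$ simultaneously under control --- including the correct treatment of the direction $\infty$, which by symmetry may be brought in by also dividing $(Y^{\p}-Y)^n$ by a ``vertical'' R\'edei polynomial --- is where I expect essentially all of the difficulty to lie.

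It remains to identify the rigid alternative with ``$U$ is contained in the union of $n$ lines'': the constrained shape forces $R(X,Y)$ to be built from $n$ linear forms in $X,Y$ --- it must divide a product of $n$ full-line R\'edei polynomials, each of type $(X+\alpha Y)^{\p}-(X+\alpha Y)$ or $(X-c)^{\p}-(X-c)(Y-m)^{\p-1}$ --- and reading $U$ off from this shape recovers the $n$ lines, the vertical case being absorbed into the $F_a$-factorisation. Assembling this with the easy boundary cases $n=1$ (which returns \Cref{thm:main:rs}), $n=\p$ (where the bound reads ``at least $2$''), and $r$ close to $\p$ completes the proof of \Cref{thm:main}.
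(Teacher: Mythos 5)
Your setup (the R\'edei polynomial, dividing $(X^{\p}-X)^n$ by it, and tracking multiplicities via derivatives of the resulting degree-$np$ polynomial) is genuinely the same framework as the paper's \cref{sec:poly,sec:lacunary}, but the proposal has a real gap exactly where you admit it: the ``lacunary-polynomial theorem with multiplicities'' that is supposed to produce the constant $n+1$ is never proved, only hoped for. In the paper no such theorem exists, and the bound $\lceil\frac{p+n+2-r}{n+1}\rceil$ does \emph{not} come out of lacunarity alone. What the polynomial method actually delivers (\cref{prop:lacunary} applied via the layer-by-layer degree bounds of \cref{RS:lacunary}, which require the refined estimates $\deg_x G_j\le E^{\dagger}+(j-1)D^{\dagger}$ for \emph{every} coefficient block, proved by induction on iterated $y$-derivatives as in \cref{RS:y} --- your single bound $\deg_X S\le (n-1)\p+s$ only controls the top block and is not enough to invoke the lacunarity proposition) is purely structural information about each rich slope $m$: every rich line meets $U$ in at least $p-r+n-1$ points, every non-rich line of that slope in at most $n-w_m$ points, $w_m\le n-1$ and $r\ge w_m(n-w_m)$ (\cref{JL:poly:method}). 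The quantitative conclusion is then obtained by a completely separate double-counting argument, absent from your sketch: one counts, for each direction $m$, the number $c_m$ of pairs of points of $U$ spanning slope $m$ (\cref{def:cm}), evaluates $\sum_m c_m=\binom N2$, bounds $\sum_{m\text{ rich}}c_m$ from below using generic and poor directions (\cref{cm:lowerbound}) and from above using the structural facts plus an inclusion--exclusion bound on the total length of the rich lines (\cref{estimate:Z,cm:upperbound}), and derives a contradiction with the assumption $D\le 1+\frac{p-r}{n+1}$ of \cref{hyp:dir}. So the denominator $n+1$ enters through this counting, not through a Wronskian-type refinement of the lacunarity lemma; without either that counting step or an actual proof of your conjectured multiplicity-lacunarity theorem, the argument does not close.

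Two further points. First, your final paragraph asserting that the ``rigid alternative'' forces $R(X,Y)$ globally to be built from $n$ linear forms (hence $U$ a union of $n$ lines) overstates what the method gives: the factorization $(x^{\p}-x)^{n-w_m}\prod_i(x-\alpha_i)^{\p}$ is only obtained \emph{per rich direction}, and ``$U$ contained in $n$ lines'' is not read off from the polynomial shape in general --- it is the contradiction hypothesis of the counting argument. Second, the boundary regime $p-r\le n$ (your ``$r$ close to $\p$'') is not an ``easy case'' to be absorbed in passing: the polynomial estimates degrade there, and the paper devotes \cref{sec:case} to a separate counting argument (via \cref{last:lemma}) showing that with at most one special direction $U$ must consist of $n-1$ full lines plus $p-r$ collinear points. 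You would need to supply both of these pieces for a complete proof.
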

Notice that for $n=1$ we recover the theorem of R\'edei and Sz\H{o}nyi. 
The case $n=2$ and $r=0$ was examined by the author in \cite{ghilu:MO}. 
The directions determined by a set $U\subseteq \mbb F_q^2$ with $\# U \leq q$, where $q$ is a power of a prime, have been studied in \cite{directions:q,directions:q:2,directions:q:less}. 
We expect that a generalization of \cref{thm:main} for $U$-generic directions in the Galois plane $\mbb F_q^2$ might be obtained with similar ideas. 
See also \cite{directions:subspaces,directions:affine} for results on the directions determined by a set $U\subseteq\mbb F_p^d$ in dimension $d\geq 3$. 
 
A widely applied technique in Incidence Galois Geometry is the so-called \emph{polynomial method}, introduced by R\'edei in his pioneering work \cite{redei}. 
In order to keep the article self-contained, we will briefly outline in \cref{sec:poly,sec:lacunary} the main definitions and results that will be needed in our application of the method. 
Our \cref{prop:lacunary} on lacunary polynomials of high degree is apparently new, although its proof is completely elementary. 
For the interested reader we refer to \cite{polynomial:survey,polynomial:higher,polynomial:extremal,polynomial:lecture,polynomial:thesis} for other expositions of the polynomial method of R\'edei and 
to \cite{otherpoly:tao,otherpoly:walsh,otherpoly:dvir,otherpoly:thesis} for other uses of polynomials for incidence problems in Combinatorics and Number Theory. 

In this work we are concerned with lines that meet $U\subseteq\plane$ in multiple points, possibly more than two. 
Thus, we were naturally led to look for a way to exploit these ``multiplicities'' in the polynomial method. 
Our crucial idea is to consider the iterated derivatives of the R\'edei-Sz\H{o}nyi polynomial with respect to its second variable (see \cref{sec:poly:RS,sec:lacunary:RS}).  
Multiplicities have been considered also by the works on multiple blocking sets, such as \cite{blocking:multiple:baer,blocking:multiple,blocking:multiple:arcs,blocking:multiple:char,blocking:nova}. 
However, in these papers the set $U$ is supposed to intersect, with a given multiplicity, lines coming from \emph{all} directions (as opposed to \emph{some} directions), and so the use of $y$-derivatives is not required.

The conclusion of \cref{thm:main} is especially strong when $n$ is small compared to $p$, but in general it is unclear whether this result is optimal. 
If $n\ll 1$ is fixed, $p-r\gg p$ and $U$ is a set with cardinality $\#U=np-r$ not contained in $n$ lines, then our theorem predicts that a positive proportion of all directions is $U$-special. 
In this range the conclusion is best-possible up to multiplication by a constant. 
On the other hand, the extreme opposite case $n=p$ is trivial because every set $U\subseteq\plane$ is contained in the union of $p$ lines. 

As a final note, we remark that when $n\leq \tfrac {p+1}{2}$ and $p\neq 2$ it is possible to find a set $U_p^{(n)}\subseteq \plane$ with cardinality  $\# U_p^{(n)}=np$, and not contained in $n$ lines, that determines exactly $\tfrac {p+3}{2}$ $U$-special directions, as follows. 
We know by \cite{redei:remarks} that the set $U_p^{(1)}:=\{(k,k^{(p-1)/2}): k\in \field\}$ determines exactly $  \tfrac {p+3}{2} $ directions of $\plane$.  
This set is not contained in one line, but we have $U_p^{(1)}\subseteq \ell_1\cup \ell_{-1}$, where $\ell_{\pm 1}:= \{(k,\pm k): k\in \field\}$. 
Then, if $n\leq \tfrac {p+1}{2}$, we can form the disjoint union $U_p^{(n)}=U_p^{(1)}\sqcup_{i=1}^{n-1} \ell^{(i)}$  with $n-1$ lines parallel to $\ell_1$.   The resulting set $U_p^{(n)}$ has the properties mentioned above. 
It would be interesting to know if better constructions are possible, so we propose the folllowing: 
\begin{problem}
	Let $U\subseteq\plane$ with $\# U = np-r$, for some $1\leq n\leq p$ and $0\leq r <p$, as in \cref{thm:main}. 
	Suppose that $U$ is not contained in the union of $n$ lines. 
	Then, are there necessarily at least $\left\lceil \frac{p+3-r}{2}\right\rceil$ $U$-special directions?
\end{problem}

\section{Number of special directions in a general case}\label{sec:special}

In this section we are going to prove \cref{thm:main} under the assumption $p-r\geq n+1$. The remaining cases will be examined in \cref{sec:case}. 
Here we also assume as a blackbox the results coming from the polynomial method. 
These are recorded in \cref{JL:poly:method} below and will be proved in \cref{sec:poly} and \cref{sec:lacunary}. 

\subsection{Setup}\label{sec:special:0}

First we prepare the stage with some notation: we fix a prime number $p$ and a set $U\subseteq\plane$ with cardinality $N:=\# U = np - r$ for some $1\leq n < p$ and $0\leq r<p-n$. 
Moreover, we assume that $U$ is not contained in the union of $n$ lines. 
To make the terminology less cumbersome, we will simply call \emph{special} a direction that is $U$-special. 
Similarly, we say that a line $\ell\subseteq \plane$ is \emph{rich} (resp. poor, special) if it is $U$-rich (resp. $U$-poor, $U$-special). 
Moreover, we complete \cref{def:dir} with the following one. 
\begin{definition}\label{def:dir:rich}
	We say that a direction $m\in\puno$ is \emph{rich} (or $U$-rich) if there is some rich line with slope $m$. 
	A direction will be called \emph{generic} (or $U$-generic) if it is not special, and we call it \emph{poor} if it is special but not rich. 
\end{definition} 

We let $\D$ be the number of special directions, let  $\Drich$ be the number of rich directions and let $\rich$ the number of rich lines. 
In particular, notice that $\Drich\leq \rich$ and that there are $\D-\Drich$ poor directions. 
Finally, we assume that there are strictly less than $\left\lceil\tfrac{p+n+2-r}{n+1}\right\rceil$ special directions or, equivalently, that 
\begin{equation}\label{hyp:dir}
	D\leq 1 + \frac {p-r}{n+1}.
\end{equation}

Given the above assumptions, we want to find a contradiction. 
To do so, we introduce the following quantities. 

\begin{definition}\label{def:cm}
	For every direction $m\in\puno$ we let $c_m$ be the number of unordered pairs of points $u,v\in U$ such that the line joining them has slope $m$. 
\end{definition}

The idea will be to estimate $c_m$ separately for generic, poor and rich directions and then to compare these inequalities with the following identity: 
\begin{equation}\label{cm:total}
\sum_{m\in\puno} c_m = \binom {N} {2} 
= \frac 1 2 \left( 
n^2p^2 - npr - np +r^2 + r
\right).
\end{equation} 

%


\subsection{Preliminary considerations} \label{sec:special:1}

%

Our first observation is that $U$ is contained in the union of the rich lines.

\begin{lemma}\label{E:rich:everypoint}
	If $U\subseteq \plane$ has cardinality $N=np-r$ with $p-r\geq n+1$, then every point of $U$ is contained in a rich line. 	
\end{lemma}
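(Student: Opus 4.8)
The plan is to argue by contradiction: suppose some point $u_0 \in U$ lies on no rich line. Count the lines through $u_0$. There are exactly $p+1$ such lines, one for each direction $m \in \puno$, and they partition $U \setminus \{u_0\}$ into $p+1$ classes. Since $\#(U \setminus \{u_0\}) = np - r - 1$, and each line through $u_0$ meets $U$ in at most $\theta$ points (as none is rich, and $\theta = \#U/p = n - r/p$, so "at most $\theta$" means at most $n$ points total on that line, i.e. at most $n-1$ points of $U \setminus \{u_0\}$ — here one must be careful with the floor, since $\theta$ need not be an integer and "at least $\theta + 1$" really means at least $n+1$ when $0 \le r < p$, so "not rich" means at most $n$), we get
\begin{equation*}
np - r - 1 = \#(U \setminus \{u_0\}) \le (p+1)\cdot(n-1).
\end{equation*}
Rearranging gives $np - r - 1 \le np + n - p - 1$, i.e. $p - r \le n$, contradicting the hypothesis $p - r \ge n+1$. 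So every point of $U$ lies on a rich line.

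The main subtlety — and the step I expect to require the most care — is the bookkeeping around the non-integer threshold $\theta = \#U/p$. A line is rich iff it meets $U$ in at least $\theta + 1$ points; since $\theta = n - r/p$ with $0 \le r < p$, the smallest integer that is $\ge \theta + 1$ is $n+1$. Hence "not rich" is equivalent to "meets $U$ in at most $n$ points", and the $p+1$ lines through $u_0$ each contribute at most $n-1$ further points of $U$. One should state this integrality reduction explicitly rather than manipulating $\theta$ directly, to avoid an off-by-one error.

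Finally I would double-check the boundary case $r = 0$, where $\theta = n$ is an integer: then "not rich" means at most $n$ points (at least $n+1$ to be rich), the same conclusion, and the inequality $np - 1 \le (p+1)(n-1)$ still reduces to $p \le n$, contradicting $p \ge n+1$ (which holds since $n < p$). So the argument is uniform across all admissible $r$, and no separate case analysis is needed.
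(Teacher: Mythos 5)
Your proof is correct and is essentially the paper's own argument: partition $U\setminus\{u_0\}$ among the $p+1$ lines through $u_0$, use that a non-rich line meets $U$ in at most $n$ points, and derive $np-r-1\leq(p+1)(n-1)$, contradicting $p-r\geq n+1$. The extra care you take with the non-integer threshold $\theta$ is sound and matches the paper's implicit reduction of ``not rich'' to ``at most $n$ points''.
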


\begin{proof}
	Pick $v\in U$ arbitrarily and notice that $U\setminus\{v\}$ is the disjoint union of the sets $(\ell\cap U) \setminus\{v\}$ as $\ell$ ranges through the $p+1$ lines passing through $v$. 
	If $v$ is not contained in a rich line, then $\#(\ell\cap U)\leq n$ for each such line, and so 
	\[
	N-1 \leq (p+1)(n-1) = np-p +n-1, 
	\]
	which contradicts the assumption $p-r\geq n+1$. 
\end{proof}

For every rich direction $m$ we denote by $\richm$ the number of rich lines with slope $m$, so that $\rich = \sum_{m \text{ rich}} \richm$. 
The polynomial method furnishes the following result.

\begin{lemma}\label{JL:poly:method}
	Let $U\subseteq\plane$ be as before, such that \cref{hyp:dir} holds and $U$ is not contained in the union of $n$ lines, and let $m$ be a rich direction. 
	Then every non-rich line with slope $m$ meets $U$ in at most $n-\richm$ points and each rich line meets $U$ in at least $p-r+n-1$ points.
	Moreover we have $\richm\leq n-1$ and $r\geq \richm(n-\richm)\geq n-1$.   
\end{lemma}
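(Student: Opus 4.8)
The plan is to package all the structural information about rich lines through a single rich direction into the R\'edei--Sz\H{o}nyi polynomial and its $y$-derivatives, and then invoke the lacunary-polynomial machinery of \cref{sec:poly,sec:lacunary} (recorded as a blackbox). Fix a rich direction $m$; after an affine change of coordinates we may assume $m=\infty$, so the lines with slope $m$ are the vertical lines $x=a$. Let $w:=\richm$ be the number of rich verticals and let $E_1,\dots,E_w\subseteq U$ be their intersections with $U$. Since each $E_i$ has at least $\theta+1=n-r/p+1$ points, i.e.\ at least $n+1$ points as $r<p$, the complement $U\setminus(E_1\cup\cdots\cup E_w)$ has cardinality $N-\#(E_1\cup\cdots\cup E_w)\le np-r-w(n+1)$; I would first want the bound $\#(E_i\cap U)\le p$ automatically, and the real content is an upper bound on the size of a \emph{non-rich} vertical line.

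The main step is to show that every non-rich vertical meets $U$ in at most $n-w$ points. The idea: the $x$-coordinates of the $w$ rich verticals contribute, via differentiation of the R\'edei polynomial in $y$ a total of $w$ times, a factor of degree $w$ that ``uses up'' $w$ of the available multiplicities. Concretely, the R\'edei polynomial $\RS(x,y)$ in the slope-$m$ setup factors so that the $w$ rich values of $x$ appear as roots of high multiplicity; after taking $w$ iterated partial derivatives $\partial_y^{w}$ one obtains a lacunary polynomial whose degree in $y$ is reduced accordingly, and \cref{prop:lacunary} (the new lacunary result, valid in high degree) forces the remaining non-rich fibers to have size at most $n-w$. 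From the same factorization one reads off that a rich line meets $U$ in at least $p-r+n-1$ points: a rich vertical $x=a$ corresponds to a root of multiplicity $\ge p-\#(E_i\cap U)$ being "too small'', and comparing degrees of the differentiated R\'edei polynomial against its lacunary structure (again via \cref{prop:lacunary}) pins the fiber size from below. This is exactly the point where the hypothesis $U\not\subseteq$ (union of $n$ lines) and the counting hypothesis \cref{hyp:dir} enter, to rule out the degenerate factorizations.

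Finally, the inequalities $w\le n-1$ and $r\ge w(n-w)\ge n-1$ will follow by combining the two bounds just obtained with the total point count. Summing the fiber sizes over all $p$ verticals: the $w$ rich verticals give at least $w(p-r+n-1)$ points, and the remaining $p-w$ verticals give at most $(p-w)(n-w)$ points, so
\[
np-r = N \;\le\; w(p-r+n-1) + (p-w)(n-w).
\]
Expanding and simplifying this single inequality yields $r\ge w(n-w)$, which in turn gives $w\le n-1$ (else $w(n-w)\le 0$ would be consistent but $w=n$ forces all verticals through $U$ to be among the rich ones, contradicting that $U$ is not in $n$ lines when combined with the analysis for other directions — alternatively $r<p-n$ closes it directly), and then $w(n-w)\ge n-1$ since the minimum of $w(n-w)$ over $1\le w\le n-1$ is $n-1$. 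The last inequality $r\ge n-1$ is immediate.

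The hard part will be the first structural step: extracting from $\partial_y^{w}\RS$ precisely the degree bookkeeping that simultaneously yields the upper bound $n-w$ on non-rich fibers and the lower bound $p-r+n-1$ on rich fibers. This requires knowing exactly how the multiplicities of the rich verticals survive differentiation and how \cref{prop:lacunary} applies to the differentiated polynomial — in particular checking that its degree is large enough (relative to $p$) for the high-degree lacunary hypothesis to bite, which is where the assumption $p-r\ge n+1$ is needed. Everything after that is the elementary arithmetic sketched above.
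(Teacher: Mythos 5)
Your proposal leaves the actual content of the lemma unproved, and the mechanism you sketch for it does not match a workable one. The two fiber bounds (non-rich lines of slope $m$ meet $U$ in at most $n-\richm$ points; rich ones in at least $p-r+n-1$ points) are precisely the hard polynomial-method statements, and you defer them to ``apply \cref{prop:lacunary} to $\partial_y^{\richm}\RS$ in the slope-$m$ setup''. Two concrete problems. First, you normalize the rich direction $m$ to $\infty$; but the RS-polynomial detects the lines of slope $m$ through the specialization $y=m$, which requires $m$ finite. The paper does the opposite (Megyesi's trick, \cref{rmk:megyesi}): it moves \emph{another} rich or special direction to $\infty$ and keeps $m$ finite. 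Second, the $y$-derivatives are never taken $\richm$ times at the rich direction: \cref{RS:y} gives $(x^p-x)^{n-\alpha}$ dividing $(\partial_y^\alpha \RS)(x,m)$ only for \emph{non-special} $m$, and in the paper these evaluations at the many generic directions (this is exactly where \cref{hyp:dir} enters, through \cref{RS:lacunary} and the degree condition $A+B+n+\deg S<p$ of \cref{prop:lacunary}) force the coefficients $G_j$ of $\RS$ to have small $x$-degree. Then the \emph{underived} specialization $H(x)=\RS(x,m)$ at the rich direction is a lacunary univariate polynomial, \cref{prop:lacunary} yields $H(x)=(x^p-x)^{n-\richm}\prod_{i=1}^{\richm}(x-\alpha_i)^p$, and all four assertions of the lemma are read off by comparing multiplicities of linear factors in $R_U(x,m)$, $S_{U,n}(x,m)$ (of degree $r$) and $H$; in particular $r\geq\richm(n-\richm)$ comes from $(x-\alpha_i)^{n-\richm}$ dividing $S_{U,n}(x,m)$, and the lower bound $p-r+n-1$ also comes from this multiplicity bookkeeping, not from counting. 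Your picture of ``$w$ derivatives using up the multiplicities of the rich verticals'' has no analogue in a correct argument, and you yourself flag this step as unresolved.

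The closing counting argument is also flawed as written: the inequality $N\leq \richm(p-r+n-1)+(p-\richm)(n-\richm)$ inserts a \emph{lower} bound for the rich fibers into an \emph{upper} bound for $N$, so it is not justified; moreover, even taken at face value it simplifies to $r(\richm-1)\leq \richm(\richm-1)$, not to $r\geq\richm(n-\richm)$. A corrected count, $N\leq \richm p+(p-\richm)(n-\richm)$, does give $r\geq\richm(n-\richm)$ once the $n-\richm$ bound on non-rich fibers is available — but that bound is exactly the part you left unproved. The step $\richm\leq n-1$ (if $\richm=n$ then non-rich fibers are empty and $U$ lies in the $n$ rich lines of slope $m$, contradicting the hypothesis) and the observation $\richm(n-\richm)\geq n-1$ for $1\leq\richm\leq n-1$ are fine, but they too hinge on the missing structural step.
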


Since $U$ is contained in the union of the rich lines by \cref{E:rich:everypoint} and $U$ is not contained in the union of $n$ lines by assumption, we have that $\rich\geq n+1$. 
Moreover, by \cref{JL:poly:method} we have $\richm\leq n-1$ for every rich direction $m$, and so $\rich\leq (n-1)\Drich$. 
Since $\Drich\leq \D$ we get 
\begin{equation}\label{estimate:W}
	n+1\leq \rich  \leq (n-1)\D 
\end{equation}

For every rich direction $m$ we let $\ellm_1,\dots,\ellm_\richm$ be all the rich lines with slope $m$ and we let $\Lm_i:=\#(\ellm_i\cap U)$ for $i=1,\ldots,\richm$. 
Then, we define
\begin{equation*}
	\sumrichm:= \sum_{i=1}^\richm L_i
	\quad\quad\text{ and }\quad \quad
	\sumrich :=\sum_{m \text{ rich}} \sumrichm. 
\end{equation*}

Since $U$ is contained in the union of the rich lines, we can estimate the sum of the lengths of the rich lines as follows.

\begin{lemma}\label{estimate:Z}
	Let $Q:=\sum_{m \text{ rich}} \richm^2$. Then 
	\begin{equation}\label{eq:estimate:Z}
		\sumrich\leq N + \frac 1 2 \rich^2 - \frac 1 2 Q.
	\end{equation}
\end{lemma}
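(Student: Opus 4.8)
The plan is to bound $\sumrich = \sum_{m\text{ rich}}\sum_{i=1}^{w_m} L^{(m)}_i$ by an inclusion–exclusion count on the union of all rich lines. Since $U$ is contained in the union of the rich lines (by \cref{E:rich:everypoint} together with the fact that $U$ is not in $n$ lines, or more precisely since every point of $U$ lies on a rich line), we have

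First I would observe that, by \cref{E:rich:everypoint}, the $\rich$ rich lines cover $U$, so $N = \#U = \#\bigl(\bigcup \ell^{(m)}_i\cap U\bigr)$. Bonferroni/inclusion–exclusion at the first level gives
\[
N \;\geq\; \sum_{m,i}\#\bigl(\ell^{(m)}_i\cap U\bigr) \;-\; \sum_{\{(m,i),(m',i')\}}\#\bigl(\ell^{(m)}_i\cap \ell^{(m')}_{i'}\cap U\bigr),
\]
where the second sum runs over unordered pairs of distinct rich lines. Now $\sum_{m,i}\#(\ell^{(m)}_i\cap U) = \sumrich$ by definition, so it remains to bound the pairwise-intersection sum from above by $\tfrac12\rich^2 - \tfrac12 Q$.

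For the pairwise term, two distinct lines meet in at most one point of the plane, hence $\#(\ell^{(m)}_i\cap\ell^{(m')}_{i'}\cap U)\leq 1$, and moreover this is $0$ when the two lines are parallel, i.e.\ when $m=m'$ (distinct parallel lines are disjoint). Therefore the sum is at most the number of unordered pairs of rich lines with \emph{different} slopes. The total number of unordered pairs of rich lines is $\binom{\rich}{2}$, and the number of unordered pairs sharing a slope is $\sum_{m\text{ rich}}\binom{w_m}{2}$. Hence
\[
\sum_{\{(m,i),(m',i')\}}\#\bigl(\ell^{(m)}_i\cap \ell^{(m')}_{i'}\cap U\bigr)
\;\leq\; \binom{\rich}{2} - \sum_{m\text{ rich}}\binom{w_m}{2}
\;=\; \frac{\rich^2-\rich}{2} - \frac{Q-\rich}{2}
\;=\; \frac12\rich^2 - \frac12 Q,
\]
using $\sum_m w_m = \rich$ so that the linear terms cancel. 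Combining with the Bonferroni inequality gives $N \geq \sumrich - (\tfrac12\rich^2 - \tfrac12 Q)$, which rearranges to \cref{eq:estimate:Z}.

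I do not anticipate a serious obstacle here: the only subtlety is getting the book-keeping of the double sum right (diagonal terms excluded, parallel pairs contributing $0$, and the cancellation of the linear-in-$\rich$ terms), and making sure the Bonferroni inequality is applied in the correct direction. One should also note explicitly that $\sumrich$ counts each rich line once even if several rich lines pass through a common point of $U$ — the inclusion–exclusion correction is exactly what compensates for this overcount, which is the reason the bound takes the stated form rather than a naive equality.
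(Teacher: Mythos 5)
Your proof is correct and follows essentially the same route as the paper: a first-level Bonferroni/inclusion--exclusion bound $N\geq \sumrich-\sum_{\text{pairs}}\#(\ell\cap\ell'\cap U)$, with the pairwise term bounded by the number of unordered pairs of rich lines with different slopes, which equals $\tfrac12\rich^2-\tfrac12 Q$ (the paper writes this same quantity as $\tfrac12\sum_{m}(\rich-\richm)\richm$). The only cosmetic difference is that the covering of $U$ by rich lines is not actually needed for the inequality, since the union of the sets $\ell\cap U$ is contained in $U$ regardless.
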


\begin{proof}
	Let $\ell_1,\ldots,\ell_\rich$ be all the rich lines. 
	By the Principle of Inclusion-Exclusion we have
	\[
		\# U  \geq \sum_{i=1}^\rich \#(\ell_i\cap U) - 
		\sum_{1\leq i<j\leq \rich} \# (\ell_i\cap \ell_j\cap U).
	\]
	Since two lines meet in a  point only when they have different slopes, we deduce that 
	\[
	 N \geq \sumrich - \frac 1 2\sum_{m \text{ rich}} (\rich-\richm)\richm,
	\] 
	which is equivalent to \cref{eq:estimate:Z}.
\end{proof}

\subsection{The key computations}\label{sec:special:2}

We are now ready to study the quantities $c_m$ introduced in \cref{def:cm} and to work them out to reach a contradiction. 
The following lemma is useful to estimate the sums that implicitly appear in the definition of the $c_m$'s.  

\begin{lemma}\label{cm:lemma}
	Let $L_1,\dots,L_\rich$ be integers satisfying $A\leq L_i\leq B$ for all $i=1,\ldots,\rich$ for some $A,B\geq 0$, and let $\sum_{i=1}^\rich L_i = \sumrich$. Then
	\begin{equation*}
		\sum_{i=1}^\rich \binom {L_i} 2 \leq  \frac 1 2 \left[(A+B-1) \sumrich - AB\rich\right],  
	\end{equation*}
	with equality if and only if $L_i\in\{A,B\}$ for all $i=1,\ldots,\rich$. 
\end{lemma}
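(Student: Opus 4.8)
The plan is to view $\sum_{i=1}^{\rich} \binom{L_i}{2}$ as the objective of an optimization problem over integer vectors $(L_1,\dots,L_\rich)$ constrained by $A \le L_i \le B$ and $\sum_i L_i = \sumrich$. Since $\binom{L}{2} = \tfrac12 L(L-1)$ is a convex function of the integer variable $L$ (its second difference is constantly $1$), a convexity/exchange argument shows that the sum is maximized precisely when the $L_i$ are pushed to the extremes of the feasible interval, i.e.\ when every $L_i \in \{A,B\}$. So I would first reduce to proving the inequality under the assumption that each $L_i \in \{A,B\}$, and separately verify that in that case equality holds in the claimed bound.

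Concretely, suppose $L_i = B$ for exactly $k$ indices and $L_i = A$ for the remaining $\rich - k$ indices; then $\sumrich = kB + (\rich-k)A$, so $k = \tfrac{\sumrich - A\rich}{B-A}$ (assuming $A < B$; the case $A = B$ is immediate since then $\sumrich = A\rich$ and both sides equal $\rich\binom{A}{2}$). In this configuration
\begin{equation*}
	\sum_{i=1}^{\rich} \binom{L_i}{2} = k\binom{B}{2} + (\rich - k)\binom{A}{2} = \binom{A}{2}\rich + k\left(\binom{B}{2} - \binom{A}{2}\right).
\end{equation*}
Substituting $k = \tfrac{\sumrich - A\rich}{B-A}$ and using $\binom{B}{2} - \binom{A}{2} = \tfrac{(B-A)(A+B-1)}{2}$, the right-hand side becomes $\tfrac12\big[A(A-1)\rich + (\sumrich - A\rich)(A+B-1)\big] = \tfrac12\big[(A+B-1)\sumrich - A\rich(A+B-1) + A(A-1)\rich\big] = \tfrac12\big[(A+B-1)\sumrich - AB\rich\big]$, which is exactly the claimed expression. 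This establishes the equality case.

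For the inequality in general, I would use the standard \emph{smoothing} (or \emph{majorization}) argument: if some $L_i$ and $L_j$ both lie strictly between $A$ and $B$, or more generally if there exist indices with $A < L_i \le L_j < B$, replace $(L_i, L_j)$ by $(L_i - 1, L_j + 1)$. This keeps the sum $\sumrich$ and all constraints intact, and changes the objective by $\big[\binom{L_i-1}{2} + \binom{L_j+1}{2}\big] - \big[\binom{L_i}{2} + \binom{L_j}{2}\big] = (L_j + 1) - L_i = L_j - L_i + 1 \ge 1 > 0$, so the objective strictly increases. Iterating this operation (it terminates because, say, $\sum_i L_i^2$ strictly increases and is bounded above) drives the configuration to one in which at most one $L_i$ lies strictly between $A$ and $B$; a final parity/arithmetic check, or one more application of the move absorbing that stray value, shows the extremal configurations are exactly those with all $L_i \in \{A,B\}$, and among these the value of the objective is the constant computed above. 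Hence the bound holds, with equality iff $L_i \in \{A,B\}$ for all $i$.

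I do not expect any serious obstacle here: the only mild subtlety is handling the integrality constraint cleanly — when $(\sumrich - A\rich)/(B-A)$ is not an integer there is no configuration with all $L_i \in \{A,B\}$, so one should phrase the equality clause as a biconditional that is vacuously consistent in that case (the hypothesis of the lemma will, in the intended application, guarantee that the relevant quantities are integers). Making the termination of the smoothing argument rigorous (via the monotone potential $\sum_i L_i^2$, which is bounded by $\rich B^2$) is the one place to be slightly careful, but it is entirely routine.
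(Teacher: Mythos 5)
Your overall strategy is sound and genuinely different from the paper's. The paper's proof is a one-line pointwise estimate: since $A\leq L_i\leq B$, we have $(L_i-A)(L_i-B)\leq 0$, i.e.\ $L_i(L_i-B)\leq A(L_i-B)$, hence $L_i(L_i-1)\leq (A+B-1)L_i-AB$ after adding $L_i(B-1)$ to both sides; summing over $i$ and halving gives the bound, and the equality clause is immediate because each pointwise inequality is an equality exactly when $L_i\in\{A,B\}$. Your route instead treats the left-hand side as an objective to be maximized over integer vectors with prescribed sum, via a smoothing/exchange argument, and then evaluates the extremal configurations exactly; this is heavier machinery, but your endpoint computation (via $\binom B2-\binom A2=\tfrac{(B-A)(A+B-1)}2$) does reproduce the paper's right-hand side, and the convexity viewpoint explains where the bound comes from.

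There is, however, one step that does not go through as written. Your move requires two indices with $A<L_i\leq L_j<B$, so the terminal configurations of the smoothing process have \emph{at most one} value strictly between $A$ and $B$, not necessarily none; and when $(\sumrich-A\rich)/(B-A)$ is not an integer, no configuration with all values in $\{A,B\}$ and the same sum exists at all, so ``one more application of the move absorbing that stray value'' is not available. To close the argument you must bound the one-stray configuration directly: with $k$ values equal to $B$, $\rich-k-1$ equal to $A$, and one value $C$ with $A<C<B$, the right-hand side exceeds the left-hand side by $\tfrac12\left[(A+B-1)C-AB\right]-\binom C2=-\tfrac12(C-A)(C-B)>0$, which also gives the strictness needed for the equality clause. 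Note that this missing check is precisely the paper's pointwise inequality applied to the single stray value --- which shows that the entire smoothing apparatus can be bypassed: applying that same one-line inequality to every $i$ simultaneously yields the lemma, equality case included.
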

\begin{proof}
	For every $i=1,\ldots,\rich$ we have $L_i(L_i-B)\leq A(L_i-B)$, with equality if and only if $L_i\in\{A,B\}$. 
	Therefore we have 
	\[
	L_i(L_i-1) \leq (A+B-1) L_i - AB
	\]
	by adding $L_i(B-1)$ on both sides. 
	The lemma now follows summing over $i=1,\ldots,\rich$ and dividing by 2. 
\end{proof}

We first consider the generic and poor directions.

\begin{proposition}\label{cm:lowerbound} 
	We have
	\begin{equation}\label{eq:cm:lowerbound}
		\sum_{m \text{ rich}} c_m \geq \frac {p-n}{2} N - \left ( 1-\frac 1 {n+1}\right)r(p-r) + \frac{n-1} 2 N \Drich.
	\end{equation}
\end{proposition}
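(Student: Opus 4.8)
The plan is to read the identity \eqref{cm:total} in the form
\[
\sum_{m\text{ rich}} c_m \;=\; \binom{N}{2} \;-\; \sum_{m\text{ generic}} c_m \;-\; \sum_{m\text{ poor}} c_m ,
\]
and to bound from above the $c_m$ attached to the non-rich directions. The starting point is the elementary remark that, since line-sizes are integers while $N/p=n-r/p$ lies in $(n-1,n]$, every line of a non-rich direction meets $U$ in at most $n$ points, and every line that is moreover non-poor meets $U$ in at least $n-1$ points. For any direction $m$ one has $c_m=\sum_{\ell}\binom{\#(\ell\cap U)}{2}$, the sum ranging over the $p$ parallel lines of slope $m$, whose intersections with $U$ partition $U$ and hence have sizes summing to $N$.

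For a generic direction $m$ all these $p$ sizes lie between $n-1$ and $n$, so \cref{cm:lemma} applied to these $p$ numbers (summing to $N$) with $A=n-1$, $B=n$ gives $c_m\le\frac{(n-1)(2N-np)}{2}=\frac{(n-1)(N-r)}{2}$ (in fact with equality, though only the inequality is needed). For a poor direction $m$ the direction is not rich, so I may use merely $0\le\#(\ell\cap U)\le n$ together with \cref{cm:lemma} with $A=0$, $B=n$, which yields the weaker bound $c_m\le\frac{(n-1)N}{2}$. Since there are $(p+1)-\D$ generic directions and $\D-\Drich$ poor ones, this gives
\[
\sum_{m\text{ rich}} c_m \;\ge\; \binom{N}{2}-\bigl((p+1)-\D\bigr)\frac{(n-1)(N-r)}{2}-(\D-\Drich)\frac{(n-1)N}{2}.
\]

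The right-hand side is affine in $\D$, with coefficient of $\D$ equal to $\frac{(n-1)(N-r)}{2}-\frac{(n-1)N}{2}=-\frac{(n-1)r}{2}\le 0$, hence non-increasing in $\D$; so I may substitute the upper bound $\D\le 1+\frac{p-r}{n+1}$ supplied by \eqref{hyp:dir}. After this substitution a routine computation with $N=np-r$ collapses the expression to exactly the right-hand side of \eqref{eq:cm:lowerbound}, with the $\Drich$-contributions on both sides in agreement.

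Essentially all the content lies in the squeezing of line-sizes for non-rich directions; the rest is bookkeeping. The step I expect to demand the most care is the final algebraic simplification, together with keeping the three classes of directions straight and checking that the degenerate cases are harmless: when $n=1$ both upper bounds above vanish, and when $r=0$ (or, more generally, $\D=\Drich$) there are no poor directions to account for. One should also verify that \cref{cm:lemma} is being invoked with admissible parameters $0\le A\le B$, which holds because $n\ge 1$.
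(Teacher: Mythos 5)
Your proposal is correct and takes essentially the same approach as the paper: you bound $c_m$ from above by $\tfrac{n-1}{2}(N-r)$ for generic directions and by $\tfrac{n-1}{2}N$ for poor ones (the paper gets these same values by direct counting and a convexity-type estimate rather than via \cref{cm:lemma}), subtract them from $\binom{N}{2}$ using \cref{cm:total}, and then substitute \cref{hyp:dir}, noting the coefficient of $\D$ is $-\tfrac{(n-1)r}{2}\leq 0$. Your final simplification does collapse to the right-hand side of \cref{eq:cm:lowerbound}, exactly as claimed.
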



\begin{proof}
 The lines with a generic slope $m$ meet $U$ in either $n$ or $n-1$ points, so 
 \begin{equation}\label{cm:generic}
   c_m=(p-r)\binom n 2 + r \binom {n-1} 2 = \frac {n-1} 2 (np-2r).
 \end{equation}
 The lines with a poor slope $m$ meet $U$ in at most $n$ points, so we see that 
 \begin{equation}\label{cm:poor}
   c_m\leq\frac N n \binom n 2 = \frac {n-1} 2 (np-r). 
 \end{equation}
 If we compare these estimates with \cref{cm:total} we obtain 
 \begin{align*}
 	\sum_{m \text{ rich}} c_m 
 	&\geq \binom N 2  - \frac {n-1} 2 \left[
 			(p+1-\D)(np-2r) + (\Dpoor)(np-r)	
 		\right] \\
 	&= 
 	\binom N 2  - \frac {n-1} 2 (p+1)(np-2r) -\frac {n-1} 2 \D r + 
 	\frac{n-1} 2  N \Drich.
 \end{align*}
 Using the fact that $\D-1\leq (p-r)/(n+1)$ by \cref{hyp:dir} and 
 \begin{equation*}
 	\binom N 2  - \frac {n-1} 2 (p+1)(np-2r) - \frac {n-1} 2 r = 
 	\frac {p-n}2 N + \frac {r(p-r)}2, 
 \end{equation*}
 we get the inequality \ref{eq:cm:lowerbound}.
\end{proof}

Next we consider the estimate, in the other direction, coming from the rich lines. 
\begin{proposition}\label{cm:upperbound}
	We have
	\begin{equation} \label{eq:cm:upperbound}
	\sum_{m \text{ rich}} c_m \leq \frac 1 2  {\Big (} 
		(n-1) N \Drich	+ (2p - r -1) N + f(W)
	{\Big)},
	\end{equation}
	where
	\begin{equation}\label{def:f}
		f(T):=\left(p-\frac r 2 - \frac {1} 2\right) T^2 - (n-1)\left(p-\frac r 2 - \frac {1} 2\right) T - p(p-r)T - NT.
	\end{equation}
\end{proposition}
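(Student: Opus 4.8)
The plan is to obtain an upper bound on $c_m$ for each individual rich direction $m$ and then add these over the $\Drich$ rich directions. Throughout, fix a rich direction $m$ and let $\ell$ range over the $p$ lines of slope $m$; these partition $U$, so $\sum_{\ell}\#(\ell\cap U)=N$. Since $\binom{a}{2}=\tfrac12(a^2-a)$, writing $c_m=\sum_{\ell}\binom{\#(\ell\cap U)}{2}$ gives the identity $c_m=\tfrac12\big(\sum_{\ell}\#(\ell\cap U)^2-N\big)$, so everything reduces to bounding a sum of squares.

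I split the $p$ lines of slope $m$ into the $\richm$ rich ones $\ellm_1,\dots,\ellm_\richm$, with $\Lm_i=\#(\ellm_i\cap U)$, and the $p-\richm$ non-rich ones, and I use \cref{JL:poly:method}. Each non-rich line meets $U$ in at most $n-\richm$ points, so it contributes at most $(n-\richm)\#(\ell\cap U)$ to the sum of squares and the non-rich lines contribute at most $(n-\richm)(N-\sumrichm)$ in total. Each rich line has $p-r+n-1\le\Lm_i\le p$ (lower bound from \cref{JL:poly:method}; upper bound because a line of $\plane$ has $p$ points), so $(\Lm_i-(p-r+n-1))(\Lm_i-p)\le0$, i.e. $(\Lm_i)^2\le(2p+n-r-1)\Lm_i-(p-r+n-1)p$; summing, the rich lines contribute at most $(2p+n-r-1)\sumrichm-(p-r+n-1)p\,\richm$. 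Adding these two bounds, subtracting $N$, and halving, I get the per-direction estimate
\[
 c_m\le\tfrac12\big[(n-1-\richm)N+(2p-r-1+\richm)\sumrichm-(p-r+n-1)p\,\richm\big].
\]

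Summing over the rich directions, and using $\sum_m\richm\sumrichm\le p\sum_m\richm^2=pQ$ (since $\sumrichm=\sum_i\Lm_i\le p\,\richm$) with $Q:=\sum_m\richm^2$ as in \cref{estimate:Z}, I obtain
\[
 \sum_{m \text{ rich}}c_m\le\tfrac12\big[(n-1)N\Drich-N\rich+(2p-r-1)\sumrich+pQ-(p-r+n-1)p\,\rich\big].
\]
Next I insert the bound $\sumrich\le N+\tfrac12\rich^2-\tfrac12Q$ from \cref{estimate:Z}. The two $Q$-terms combine into $\big(p-\tfrac{2p-r-1}{2}\big)Q=\tfrac{r+1}{2}Q$, and since this coefficient is $\ge0$ I can finish with the bound $Q=\sum_m\richm^2\le(n-1)\sum_m\richm=(n-1)\rich$, valid because $\richm\le n-1$ by \cref{JL:poly:method}. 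A routine rearrangement, using the identity $(p-r+n-1)p-\tfrac{(n-1)(r+1)}{2}=p(p-r)+(n-1)\big(p-\tfrac r2-\tfrac12\big)$, turns the right-hand side into exactly $\tfrac12\big[(n-1)N\Drich+(2p-r-1)N+f(\rich)\big]$ with $f$ as in \cref{def:f}.

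The one place that needs attention is this endgame: the three auxiliary quantities $\sumrich$, $Q$ and $\sum_m\richm\sumrichm$ must all be eliminated in favour of $N$ and $\rich$, and this succeeds only because the coefficient of $Q$ left after applying \cref{estimate:Z} happens to be the non-negative number $\tfrac{r+1}{2}$ (so $Q\le(n-1)\rich$ can be applied in the right direction) and because the resulting linear-in-$\rich$ terms collapse precisely to $-(n-1)\big(p-\tfrac r2-\tfrac12\big)-p(p-r)-N$, the coefficient appearing in $f$. One should also note that the chord inequality for $\Lm_i$ requires $0\le p-r+n-1\le p$: the first inequality holds since $r<p-n$, and the second holds whenever a rich direction exists, because then $r\ge\richm(n-\richm)\ge n-1$ by \cref{JL:poly:method} — and if no rich direction exists the asserted bound is trivial, its right-hand side being non-negative.
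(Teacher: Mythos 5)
Your argument is correct and is essentially the paper's own proof: the same per-rich-direction split into rich and non-rich lines, the bounds $p-r+n-1\le \Lm_i\le p$ and $\Lm_i\le n-\richm$ from \cref{JL:poly:method}, the chord inequality (which is exactly \cref{cm:lemma}), the step $\richm\sumrichm\le p\richm^2$, and finally \cref{estimate:Z} together with $Q\le(n-1)\rich$. Your closing remarks (the sign $\tfrac{r+1}{2}\ge 0$ of the residual $Q$-coefficient and the consistency $r\ge n-1$ of the interval for $\Lm_i$) are accurate and consistent with how the paper uses these facts.
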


\begin{proof}
	For every rich direction $m$ let $\ellm_1, \ldots,\ellm_{\richm}$ be all the rich lines with slope $m$ and let $\ellm_{\richm+1},\ldots,\ellm_\p$ be the non-rich lines with slope $m$. Then for every $i=1,\dots,p$ we let $\Lm_i = \#(\ellm_i\cap U)$. 
	For every $i=1,\ldots,\richm$ we have $p-r+n-1\leq \Lm_i\leq p$ by \cref{JL:poly:method}, so by \cref{cm:lemma} we have 
	\begin{equation*}
		2\sum_{i=1}^{\richm} \binom{\ \, \Lm_i}{2} \leq (2p-r+n-2) \sumrichm - p(p-r+n-1) \richm. 
	\end{equation*}
	Again by \cref{JL:poly:method} we have $\Lm_i\leq n-\richm$ for every $i=\richm+1,\dots,p$. 
	Therefore
	\begin{align*}
		2\sum_{i=\richm+1}^{p} \binom{\ \,\Lm_i}{2}
		 &\leq (n-\richm-1)(N-\sumrichm) \\
		 &\leq (n-1)N -  \richm N -  (n-1)\sumrichm + \richm^2p, 
	\end{align*}
	where we also noticed that $\richm\sumrichm\leq \richm^2 p$. 
	By the above two estimates and summing over $m$ we obtain
	\begin{equation}\label{eq:upper:end}
		\sum_{m \text{ rich}} c_m \leq \frac 1 2  {\Big (} 
		(n-1) N \Drich	- N\rich + (2p - r -1) \sumrich  - p(p-r+n-1)\rich + p Q 
		{\Big)}.
	\end{equation}
	By \cref{JL:poly:method} we have $\richm\leq n-1$ for all rich direction $m$, and so $Q\leq (n-1)\rich$. 
	Using \cref{estimate:Z} to simplify the $\sumrich$ in \cref{eq:upper:end} and then using this inequality $Q\leq (n-1)\rich$, we finally get \cref{eq:cm:upperbound}. 
\end{proof}

Next we observe that the worst-case scenario (with respect to the goal of getting a contradiction from the two above estimates) is represented by the case $W=n+1$. 

\begin{lemma}\label{f:max}
	Let $f(T)$ be as in \cref{def:f}. 
	Then $f(\rich)\leq f(n+1)$. 
\end{lemma}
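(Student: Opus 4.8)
The plan is to show that the quadratic function $f(T)$ defined in \cref{def:f} is monotonically decreasing on the interval $[n+1, \rich]$, so that its maximum over that interval is attained at the left endpoint $T = n+1$; since $\rich \geq n+1$ by \cref{estimate:W}, this gives $f(\rich) \leq f(n+1)$. First I would rewrite $f(T)$ in the cleaner form
\[
	f(T) = \left(p - \frac r 2 - \frac 1 2\right) T^2 - \left[ (n-1)\left(p - \frac r 2 - \frac 1 2\right) + p(p-r) + N \right] T,
\]
i.e. $f(T) = a T^2 - bT$ with $a = p - \tfrac{r+1}{2}$ and $b = (n-1)a + p(p-r) + N$. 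Since $r < p - n < p$, we have $r + 1 \leq p$, hence $a = p - \tfrac{r+1}{2} \geq \tfrac p2 > 0$, so $f$ is an upward parabola with vertex at $T = \tfrac{b}{2a}$, and $f$ is decreasing precisely on $(-\infty, \tfrac{b}{2a}]$.

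**The key estimate.** Thus it suffices to prove the two bounds $n+1 \leq \rich \leq \tfrac{b}{2a}$. The left inequality is \cref{estimate:W}. For the right inequality, again by \cref{estimate:W} we have $\rich \leq (n-1)\D$, and by \cref{hyp:dir} we have $\D \leq 1 + \tfrac{p-r}{n+1}$, so $\rich \leq (n-1)\left(1 + \tfrac{p-r}{n+1}\right) \leq (n-1) + (p - r)$, using $n - 1 < n + 1$. It therefore remains to check that $(n-1) + (p-r) \leq \tfrac{b}{2a}$, equivalently $2a\bigl((n-1) + (p-r)\bigr) \leq b = (n-1)a + p(p-r) + N$. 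Since $N = np - r$, the right side is $(n-1)a + p(p-r) + (np - r)$, and after substituting $a = p - \tfrac{r+1}{2}$ this reduces to a polynomial inequality in $p, n, r$ which I expect to hold comfortably in the regime $1 \leq n < p$, $0 \leq r < p - n$; I would verify it by collecting the dominant term $p(p-r)$ on the right against $2a(p-r) \leq (2p - r - 1)(p - r)$ on the left and checking the lower-order terms separately. Concretely, the inequality to settle is
\[
	(2p - r - 1)\bigl((n-1) + (p-r)\bigr) \leq (n-1)\left(p - \frac{r+1}{2}\right) + p(p-r) + np - r,
\]
which after expanding becomes a matter of confirming that the coefficient arrangement leaves a nonnegative remainder; the margin is generous because $p(p-r)$ on the right dominates $(2p - r - 1)(p - r)$ on the left by roughly $(p - r)(r + 1 - p + \text{lower order})$ — here one must be slightly careful, since $(p-r)$ can be as large as $p$, so the cross-terms involving $n$ must be tracked, and this is where I'd invest the bookkeeping.

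**Main obstacle.** The conceptual structure is straightforward — everything reduces to locating the vertex of an explicit parabola and bounding $\rich$ via the two lemmas already established — so the only real work is the final arithmetic inequality. The delicate point is that $r$ ranges up to $p - n - 1$, so one cannot simply discard terms linear in $p$: the comparison $2a \cdot (p - r + n - 1) \leq b$ must be handled by keeping $r$ symbolic and exploiting $r < p - n$ at the end, rather than by crude bounding. I would carry this out by writing both sides as polynomials in $r$ (with $p, n$ as parameters), observing both are quadratic in $r$ with the same leading coefficient, and reducing to a linear-in-$r$ inequality which is then checked at the relevant range endpoint $r = p - n - 1$ (and at $r = 0$), invoking $n \geq 1$ and $p \geq n + 1$. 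That case analysis at the two extreme values of $r$, together with linearity, is the crux and the place most likely to hide an off-by-one; everything else is mechanical.
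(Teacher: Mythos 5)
Your reduction to monotonicity is where the argument fails. To run your plan you must show $\rich\leq \tfrac b{2a}$, i.e.\ that the vertex of the parabola lies to the right of $\rich$; but the vertex sits at roughly $\tfrac{p+n}{2}$ (for small $r$), while the only bound available for $\rich$, namely $\rich\leq (n-1)\D\leq (n-1)+(p-r)$ from \cref{estimate:W,hyp:dir}, is of size roughly $p+n$, about twice the vertex. Concretely, the ``inequality to settle'' that you defer to bookkeeping,
\[
(2p - r - 1)\bigl((n-1) + (p-r)\bigr) \leq (n-1)\left(p - \frac{r+1}{2}\right) + p(p-r) + np - r,
\]
is simply false: for $n=2$, $r=0$ it reads $2p^2+p-1\leq p^2+3p-\tfrac12$, which fails for every $p\geq 3$ (similarly for $n=1$, $r=0$ it reads $2p^2-p\leq p^2+p$). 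The culprit is structural, not an off-by-one: on the left the factor $2a\approx 2p$ multiplies $(p-r)$, while on the right the dominant term is only $p(p-r)$, so the discrepancy is of order $p(p-r)$ and no tracking of lower-order terms can rescue it.

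What suffices, and what the paper actually uses, is the weaker \emph{chord} condition rather than the vertex condition. Writing $f(T)=aT^2-bT$ with $a=p-\tfrac{r+1}2>0$ and $b=(n-1)a+p(p-r)+N$, for $\rich>n+1$ one has $f(\rich)\leq f(n+1)$ if and only if the secant slope
\[
\frac{f(\rich)-f(n+1)}{\rich-n-1} \;=\; a(\rich+n+1)-b \;=\; a(\rich+2)-p(p-r)-N
\]
is nonpositive. By \cref{estimate:W,hyp:dir} together with the standing assumption $p-r\geq n+1$ one gets $\rich\leq n-3+p-r$, hence $\rich+2\leq p-r+n-1$, and then
\[
a(p-r+n-1)-p(p-r)-N \;=\; -\tfrac32(p-r)-\tfrac{(n-1)(r+1)}2-\tfrac{r(p-r)}2\;\leq\;0 ,
\]
which holds with room to spare because here $a<p$ is compared against $p(p-r)+N$ only once, not twice. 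So the fix is to replace ``$f$ is decreasing up to $\rich$'' (which demands $2a\rich\leq b$ and is unprovable from the available bounds) by ``the chord from $n+1$ to $\rich$ has nonpositive slope'' (which demands only $a(\rich+n+1)\leq b$); with that change the rest of your computation scheme goes through.
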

\begin{proof}
	If $W=n+1$ there is nothing to prove. 
	Otherwise, we have 
	\begin{equation*}
		n+1< \rich\leq (n-1)\D \leq n-1+\frac {n-1}{n+1} (p-r) \leq n-3+p-r
	\end{equation*}
	by \cref{estimate:W,hyp:dir} and the assumption $p-r\geq n+1$. 
	Moreover the leading coefficient $p-(r+1)/2 = p/2 + (p-r-1)/2$ of $f(T)$ is positive, hence 
	\begin{align*}
		\frac{f(\rich)-f(n+1)}{\rich-n-1} 
		& = \left(p - \frac r 2 - \frac 1 2  \right) {\Big[}(\rich+n+1) - (n-1) {\Big]} - p (p-r) - N\\
		&\leq \left(p - \frac r 2 - \frac 1 2  \right) (p-r+n-1) - p (p-r) - N\\
		& = -\frac 3 2 (p-r) - \frac{n-1} 2 (r+1) - \frac {r(p-r)} 2. 
	\end{align*}
	This last expression is manifestly nonnegative, so $f(\rich) \leq f(n+1)$. 	
\end{proof}

By \cref{cm:lowerbound,cm:upperbound,f:max} we get
\begin{equation}\label{end:end}
	(p-n) N - 2r(p-r) + \frac {2 r (p-r)}{n+1}\leq (2p-r-1) N + f(n+1), 
\end{equation}
where, after some simplification: 
\[
f(n+1) = (n+1)[-p(p-r)-np +2p-1] .
\]
We notice that 
\[
p(p-r)(n+1) -(2p-r) N + p N -2r (p-r) = (p-r)^2
\]
so \cref{end:end} implies
\begin{equation}\label{end:end:end}
(p-r)^2 + \frac {2 r (p-r)}{n+1}\leq -(n+1)(n-2)p - (n+1).
\end{equation}
If $n\geq 2$ the right-hand side of \cref{end:end:end} is negative, so this inequality is impossible. 
Also for $n=1$ it is, because in this case $p-r\geq n+1=2$ and so \cref{end:end:end} implies
\[
2p\leq (p-r)^2 + 2r(p-r)\leq 2p-2.
\]

\section{The polynomial method}\label{sec:poly}

In this section introduce important tools in the polynomial method applied to ``direction problems'' of finite affine geometry.

\subsection{R\'edei polynomial and Sz\H{o}nyi complement} \label{sec:poly:redei}

The starting point of the method is the following polynomial that was introduced by R\'edei \cite{redei}. This polynomial is used to encode algebraically the multiplicity of intersection between the set $U$ and all the lines $\ell\subseteq \plane$ of the plane. 

\begin{definition}\label{def:redei}
	Given $U\subseteq\plane$ nonempty, we define the (inhomogeneous affine) \emph{R\'edei polynomial} $R_U(x,y)\in\field[x,y]$ by 
	\[
		R_U(x,y):=\prod_{(a,b)\in U} (x-ay+b).
	\]
\end{definition}
 
It has the following remarkable property: if $(a,b)\neq (a',b')$, we have
\begin{equation}\label{redei:property}
 (x-ay+b)=(x-a'y+b') \iff \frac{b-b'}{a-a'} = y.
\end{equation} 
In other words, two linear factors of $R_U$ are equal when $y$ is the slope of the line connecting $(a,b)$ and $(a',b')$. 
In particular:
\begin{remark}\label{redei:multiplicity}
For all $k,m\in\field$, the multiplicity of the linear polynomial $x-k$ within the factorization of $R_U(x,m)$ in $\field[x]$ equals the number of points of $U$ on the line $\ell_{m,k}=\{(u,v):v=mu-k\}$. 
\end{remark}

We observe that $R_U$ is a (non homogeneous) completely reducible (i.e. it factors completely as a products of linear polynomials) polynomial in two variables of total degree equal to $\#U$. 
When $\# U< \p$, Sz\H{o}nyi found an ingenious and meaningful way to complete $R_U$ to a polynomial of degree $p$. 
Our objective now is to define an analogous natural ``complement to degree $np$'' when $(n-1)p<\#U< np$, for some $n\in\N$. 

\begin{definition}\label{def:szonyi}
	Let $\mcl A$ denote the ring $\mcl A:=\field[y]$. 
	Given $U\subseteq\plane$ nonempty and $n\in\N$ with $\# U\leq np$ we define $S_{U,n}(x,y),T_{U,n}(x,y)\in\field[x,y]$ be respectively the quotient and the remainder of the univariate polynomial long division in $\mcl A[x]$ of $(x^p-x)^n$ by $R_U$:
	\[
	(x^p-x)^n = R_U(x,y) S_{U,n}(x,y) + T_{U,n}(x,y).
	\]
	We call $S_{U,n}(x,y)$ the ($n$th generalized) \emph{Sz\H{o}nyi complement} of $U$.
\end{definition}

Notice that, as a polynomial in $x$, $R_U$ is a \emph{monic} polynomial of degree $\deg_x R_U=\# U$, so the long division is well-defined and $S_{U,n}$ is again a monic polynomial in $x$, with degree $\deg_x S_{U,n} = np-\# U$.

\subsection{The RS-polynomial and its y-derivatives}\label{sec:poly:RS}

We are now able to introduce our main object of study.

\begin{definition}\label{def:RS}
	Let $U\subseteq\plane$ be nonempty and let $n\in\N$ with $np\geq \# U$. 
	We define the $n$th \emph{R\'edei-Sz\H{o}nyi polynomial} $\RS(x,y) = x^{np}+h_1(y)x^{np-1}+\dots+h_{np}(y)$ (in short \emph{RS-polynomial}) of $U$ by 
	\[
	\RS(x,y):=R_U(x,y)S_{U,n}(x,y).
	\]
\end{definition}

By inspection of the long division algorithm in \cref{def:szonyi}, it is not difficult to check that the Sz\H{o}nyi complement $S_{U,n}(x,y)$ is a polynomial in two variables with \emph{total degree} equal to $np-\# U$. Therefore we have
\begin{remark}\label{RS:degree}
	For all $U\subseteq \plane$ and $n\in\N$ with $np\geq\# U$ the RS-polynomial $\RS$ is a polynomial in two variables with total degree equal to $np$. 
	In particular its $x$-coefficients $h_j(y)$, for $1\leq j\leq np$, are polynomials in $y$ of degree at most $j$.
\end{remark}

The following is the most fundamental property of the RS-polynomial, namely its interaction with the non-vertical non-$U$-rich directions. 

\begin{proposition}\label{RS}
	Let $U\subseteq\plane$ be nonempty, let $n\in\N$ with $np\geq \# U$ and let $m\neq \infty$.  
	If $m$ is not a $U$-rich direction, then $\RS(x,m) = (x^p-x)^n$.
\end{proposition}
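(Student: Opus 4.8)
The plan is to strip the proposition down to the vanishing of a single remainder polynomial and then establish that vanishing by a divisibility argument. Writing the division of \cref{def:szonyi} as $(x^p-x)^n = R_U\,S_{U,n} + T_{U,n}$ and recalling $\RS = R_U\,S_{U,n}$, one has the identity $\RS(x,y) = (x^p-x)^n - T_{U,n}(x,y)$ in $\field[x,y]$. So, after substituting $y=m$, the claim $\RS(x,m)=(x^p-x)^n$ is equivalent to $T_{U,n}(x,m)=0$ in $\field[x]$, and that is what I would aim to prove.

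The key point is that the whole construction is monic in $x$, so the division specializes cleanly. Since $R_U(x,y)$ is monic in $x$ of degree $\#U$, its specialization $R_U(x,m)$ is again monic of degree $\#U$ in $\field[x]$; substituting $y=m$ into the division identity gives $(x^p-x)^n = R_U(x,m)\,S_{U,n}(x,m) + T_{U,n}(x,m)$ with $\deg_x T_{U,n}(x,m) \le \deg_x T_{U,n} < \#U = \deg_x R_U(x,m)$. Thus $S_{U,n}(x,m)$ and $T_{U,n}(x,m)$ are precisely the quotient and remainder of the univariate division of $(x^p-x)^n$ by the monic polynomial $R_U(x,m)$ in $\field[x]$. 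By uniqueness of division with remainder over the field $\field$, it therefore suffices to show that $R_U(x,m)$ divides $(x^p-x)^n$ in $\field[x]$.

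This divisibility is where non-richness of $m$ enters, via \cref{redei:multiplicity}. The $p$ lines of slope $m$ partition $U$, so $R_U(x,m) = \prod_{k\in\field}(x-k)^{e_k}$ with $e_k := \#(\ell_{m,k}\cap U)$ and $\sum_{k\in\field} e_k = \#U$. If some $e_k$ were at least $n+1$, then, since $n \ge \#U/p$ by the hypothesis $np\ge\#U$, that line would meet $U$ in at least $\#U/p+1$ points, i.e. it would be $U$-rich, contradicting the assumption on $m$; hence $e_k \le n$ for every $k$. Comparing with the Fermat factorization $(x^p-x)^n = \prod_{k\in\field}(x-k)^n$ then gives $R_U(x,m)\mid (x^p-x)^n$, so $T_{U,n}(x,m)=0$ and $\RS(x,m)=(x^p-x)^n$, as wanted.

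I do not expect a genuine obstacle here: the argument is a short combination of $x^p-x=\prod_{k\in\field}(x-k)$ with uniqueness of polynomial division. The only two places that warrant a line of care are (i) that the specialization $y\mapsto m$ turns the division over $\field[y][x]$ into an honest division over $\field[x]$ exactly because $R_U$ is monic in $x$, so the divisor does not drop degree, and (ii) the elementary bookkeeping that identifies ``$m$ not $U$-rich'' with ``every line of slope $m$ meets $U$ in at most $n$ points'' under $\#U\le np$.
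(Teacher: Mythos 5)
Your proof is correct and takes essentially the same route as the paper: specialize the division of \cref{def:szonyi} at $y=m$, use the non-richness of $m$ (via \cref{redei:multiplicity}) to get $R_U(x,m)\mid (x^p-x)^n$, and force $T_{U,n}(x,m)=0$ by a degree argument. The only differences are cosmetic --- you phrase the last step as uniqueness of division with remainder and make explicit the multiplicity bound $e_k\le n$, whereas the paper notes that $R_U(x,m)$ would divide the lower-degree polynomial $T_{U,n}(x,m)$, which forces it to vanish.
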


\begin{proof}
	By \cref{def:szonyi} we have 
	\[
	(x^p-x)^n = R_U(x,m) S_{U,n}(x,m) + T_{U,n}(x,m)
	\]
	and from \cref{redei:multiplicity} we have $R_U(x,m)\divides\, (x^p-x)^n$. 
	Hence $R_U(x,m)\divides\, T_{U,n}(x,m)$. 
	However, $R_U(x,m)$ is monic with $\deg_x R_U(x,m)=\# U$, while $\deg_x T_{U,n}<\# U$ by definition. 
	The only way this can happen is with $T_{U,n}(x,m)=0$, so $\RS(x,m) = (x^p-x)^n$ by \cref{def:RS}.
\end{proof}

The following is another important observation that we will exploit for the proof of our main theorem: when $m$ is a $U$-generic direction, the $y$-derivatives of the RS-polynomial are divisible by suitable powers of $x^p-x$. 

\begin{proposition}\label{RS:y}
Let $U\subseteq\plane$ with $(n-1)p<\#U\leq np$ for some $n\in\N$. 
Suppose that $m\neq \infty$ is not a $U$-special direction. 
Then for every $\alpha\leq n$ we have $$(x^p-x)^{n-\alpha}\divides (\partial_y^\alpha \RS)(x,m).$$ 
\end{proposition}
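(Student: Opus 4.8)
The plan is to exploit \Cref{RS} together with the structure of the RS-polynomial as a one-parameter family in $y$. Since $m$ is not $U$-special, it is in particular not $U$-rich, so \Cref{RS} applies not only at $m$ but at \emph{every} slope in a neighbourhood of $m$ — more precisely, for all $m'\neq\infty$ that are not $U$-rich. The key idea: if $m$ is not $U$-special, then no line with slope $m$ is $U$-poor, so all $p$ lines through the pencil at direction $m$ meet $U$ in exactly $n$ or $n-1$ points, and the same holds (by a Pigeonhole/parity-of-counting argument on the $np-r$ points distributed among $p$ lines) for slopes $m'$ sufficiently close to $m$ in the sense that they too fail to be $U$-rich. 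Actually the cleanest route is: the set of $U$-rich directions is finite, so there is a Zariski-dense (cofinite) set of $m'\in\field$ at which $\RS(x,m')=(x^p-x)^n$ identically in $x$.

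First I would write $\HH(x,y):=\RS(x,y)-(x^p-x)^n\in\field[x,y]$. By \Cref{RS}, for every $m'\in\field$ that is not $U$-rich we have $\HH(x,m')=0$ in $\field[x]$, i.e. every $x$-coefficient of $\HH$, viewed as a polynomial in $y$, vanishes at all such $m'$. There are only finitely many $U$-rich directions — in fact, by \Cref{JL:poly:method}-type bounds (or simply because each rich line "uses up" at least $\theta+1$ points and there are $N$ points total), the number of rich directions is bounded — so the vanishing set of each $x$-coefficient of $\HH$ contains a cofinite subset of $\field$. This does \emph{not} immediately force $\HH\equiv 0$ (the field is finite), so instead I would argue locally at $y=m$: expand $\HH(x,y)$ in powers of $(y-m)$,
\[
\RS(x,y)=(x^p-x)^n+\sum_{\alpha\ge 1}\frac{(\partial_y^\alpha\RS)(x,m)}{\alpha!}\,(y-m)^\alpha,
\]
and the goal is to show $(x^p-x)^{n-\alpha}$ divides the coefficient of $(y-m)^\alpha$ for each $\alpha\le n$.

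The mechanism should be: since $m$ is $U$-generic, every line through direction $m$ meets $U$ in $n$ or $n-1$ points, hence $R_U(x,m)=\prod_k (x-k)^{e_k}$ with each $e_k\in\{n-1,n\}$, so $(x^p-x)^{n-1}\mid R_U(x,m)$, and dividing the identity $(x^p-x)^n=R_U(x,m)S_{U,n}(x,m)$ shows $S_{U,n}(x,m)$ is divisible by $x^p-x$ exactly to the complementary order. More to the point, I would differentiate the defining relation $(x^p-x)^n=R_U S_{U,n}+T_{U,n}$ repeatedly in $y$; since $(x^p-x)^n$ has no $y$-dependence, $\partial_y^\alpha(R_U S_{U,n})=-\partial_y^\alpha T_{U,n}$, and by the Leibniz rule each $\partial_y^\alpha\RS=\partial_y^\alpha(R_U S_{U,n})$ is a sum of terms $\binom{\alpha}{j}(\partial_y^j R_U)(\partial_y^{\alpha-j}S_{U,n})$. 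Each $\partial_y^j R_U(x,m)$ is, by the product rule applied to $\prod(x-ay+b)$, a sum of products in which at least $N-j$ of the original linear factors survive untouched; grouping these over the pencil at slope $m$ one extracts a factor $(x^p-x)^{n-1-\lceil j/(\text{something})\rceil}$ — the bookkeeping here is the crux. Combined with the vanishing $T_{U,n}(x,m')=0$ on a cofinite set (which gives information on $\partial_y^\alpha T_{U,n}(x,m)$ via a Vandermonde/Hermite interpolation argument once one knows the total $y$-degree is controlled, as in \Cref{RS:degree}), one should land the divisibility $(x^p-x)^{n-\alpha}\mid(\partial_y^\alpha\RS)(x,m)$.

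The main obstacle I anticipate is the transition from "$T_{U,n}(x,m')=0$ for all non-rich $m'$" to "$(\partial_y^\alpha T_{U,n})(x,m)$ is divisible by the right power of $x^p-x$" — because the finiteness of $\field$ means a polynomial in $y$ can vanish on a cofinite set without being zero, so one cannot simply say the remainder is identically zero. The resolution should be to keep everything at the level of $R_U$ and $S_{U,n}$ near $y=m$: show directly that $R_U(x,y)$, Taylor-expanded at $y=m$, has the shape $(x^p-x)^{n-1}\cdot(\text{stuff})+(\text{higher order in }y-m)$ with explicit divisibility drops, and similarly pin down $\ord_{x^p-x}$ of the Taylor coefficients of $S_{U,n}(x,y)$ at $y=m$ from the long-division recursion, then multiply. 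This is essentially a careful valuation computation with respect to the "ideal" $(x^p-x)$ in $\field[x]$, organized by the pencil structure at the generic slope $m$, and I expect it to be the technical heart of the argument.
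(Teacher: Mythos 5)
There is a genuine gap: your write-up correctly identifies the general mechanism (Leibniz differentiation of the product $R_U\cdot S_{U,n}$, plus the fact that at a generic slope $m$ every line of the pencil meets $U$ in $n-1$ or $n$ points, so $(x^p-x)^{n-1}\mid R_U(x,m)$), but the step you yourself call ``the crux'' — the divisibility bookkeeping — is never carried out, and the exponent you guess for it ($n-1-\lceil j/(\text{something})\rceil$) is not of the right form. Moreover, a substantial part of your plan is aimed at a non-problem: you worry about transferring the vanishing $T_{U,n}(x,m')=0$ from other non-rich slopes $m'$ to derivative information at $m$ via interpolation. This is unnecessary, because $\RS:=R_U S_{U,n}$ by \cref{def:RS}, so $\partial_y^\alpha\RS$ never involves $T_{U,n}$ at all; the only global input needed is \cref{RS} evaluated at the single value $y=m$, giving $R_U(x,m)S_{U,n}(x,m)=(x^p-x)^n$. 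The cofinite-vanishing/Zariski-density detour is also unworkable over $\field$, as you note yourself, so as written it cannot be salvaged into a proof — the whole argument must be local at $y=m$, which is exactly where your sketch stops.

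The missing computation can be closed as the paper does. Since each of the $p$ lines with slope $m$ contains at least $n-1$ points of $U$, choose $V\subseteq U$ with exactly $n-1$ points on each such line and set $P(x,y):=\prod_{(a,b)\in V}(x-ay+b)$, so that $P(x,m)=(x^p-x)^{n-1}$; writing $\RS=P\cdot Q\cdot S_{U,n}$ with $Q=R_U/P$, one gets $Q(x,m)S_{U,n}(x,m)=x^p-x$. By the product rule, $(\partial_y^jP)(x,m)$ is a combination of subproducts of $P(x,m)$ with $j$ linear factors removed, hence divisible by $(x^p-x)^{n-1-j}$ (each factor $x-k$ loses at most $j$ of its $n-1$ occurrences). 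Then in
\[
\partial_y^\alpha \RS=(\partial_y^\alpha P)\,Q\,S_{U,n}+\sum_{j=0}^{\alpha-1}\binom{\alpha}{j}(\partial_y^jP)\,\partial_y^{\alpha-j}(QS_{U,n}),
\]
the first term at $y=m$ picks up the extra factor $x^p-x$ from $Q(x,m)S_{U,n}(x,m)$, and every other term has $j\le\alpha-1$, so $(x^p-x)^{n-1-j}$ already has exponent at least $n-\alpha$; nothing need be known about $y$-derivatives of $S_{U,n}$. Your version, which differentiates $R_U$ as a whole and pairs $(\partial_y^\alpha R_U)(x,m)$ against the undifferentiated $S_{U,n}(x,m)$, can also be made to work, but only via a per-line count you do not perform: in each expansion term the multiplicity of $x-k$ is $(e_k-a_k)+(n-e_k)=n-a_k\ge n-\alpha$, where $e_k\in\{n-1,n\}$ and $a_k$ counts the differentiated points on the line corresponding to $k$. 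Without one of these two accountings the proposal remains a plan rather than a proof.
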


\begin{proof}
Recalling \cref{def:dir}, we notice that $\RS(x,m)=(x^p-x)^n$ by \cref{RS}.  
Meoreover, by \cref{redei:multiplicity}, we have that $R_U(x,m)$ is divisible by $(x^p-x)^{n-1}$ (or even $(x^p-x)^{n}$ if $\#U=np$). 
Therefore it is possible to find some $V\subset U$ such that, for 
\[
P(x,y) := \prod_{(a,b)\in V} (x- a y + b),
\]
we have $P(x,m)=(x^p-x)^{n-1}$. 
We write $\RS(x,y)=P(x,y)Q(x,y)S_{U,n}(x,y)$ for $Q = R_U/P$ and we notice that 
\begin{equation}\label{QS}
Q(x,m)S_{U,n}(x,m)=x^p-x.
\end{equation}
By Leibniz' formula, any iterated $y$-derivative $\partial_y^j P$ is a linear combination of polynomials of the form  
\[
P_W:=\prod_{(a,b)\in W} (x- a y + b)
\]
where $W\subseteq V$ satisfies $\#W = (n-1)p-j$. 
For every such $W$ we clearly have $(x^p-x)^{n-1-j}\divides P_W(x,m)$ and so $(x^p-x)^{n-1-j}\divides \partial_y^j P$ for every $\leq n-1$. 
Again by Leibniz' formula we have 
\[
\partial_y^\alpha \RS= (\partial_y ^\alpha P)\cdot Q\cdot S_{U,n} + \sum_{j=0}^{\alpha-1} \binom{\alpha}{j} (\partial_y ^j P)\cdot \partial_y^{\alpha-j}( Q\cdot S_{U,n}).
\]
If we evaluate this identity at $y=m$ and we remember \cref{QS}, the previous discussion about $\partial_y^j P$ implies that $(x^p-x)^{n-\alpha}\divides (\partial_y^\alpha \RS)(x,m)$ as required. 
\end{proof}

\section{Lacunary polynomials}\label{sec:lacunary}

The work of R\'edei includes beautiful and remarkable, albeit elementary,  results on ``lacunary'' polynomials over finite fields. 
Here we state and generalize his main observation. 
Then we will give sufficient conditions for the RS-polynomial to be lacunary, and we will deduce combinatoral consequences from this result. 

\subsection{Lacunary polynomials and reducibility}\label{sec:lacunary:reducible}

It is well-known that the polynomials $x^p-x$ and $x^p-\alpha$ (for any $\alpha\in\field$) are completely reducible: in fact, we have
\[
x^p-x = \prod_{k\in\field} (x-k)
\quad\quad\text{and}\quad\quad
x^p-\alpha = (x-\alpha)^p.
\]
These polynomials are ``lacunary'' in the sense that most of their coefficients are zero. In other words, the products above give rise to massive cancellation coefficient-wise. 
R\'edei noticed that these are the only cases in which this happens:
\begin{proposition}[R\'edei]\label{prop:lacunary:redei}
	If $x^p+g(x)\in\field[x]$ with $\deg g<\tfrac{p}{2}$ is completely reducible, then either $g(x)$ is constant or $g(x)=-x$.
\end{proposition}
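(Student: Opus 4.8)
The plan is to analyze the factorization of $f(x) := x^p + g(x)$ over $\field$, using the fact that $f$ is completely reducible to write $f(x) = \prod_{k \in \field}(x-k)^{e_k}$ with $\sum_k e_k = p$. The starting point is the classical observation that $f(x)$ divides $\prod_{k \in \field}(x-k)^p = (x^p - x)^p$ componentwise in a controlled way; more precisely, since every root of $f$ lies in $\field$, we have $f(x) \mid (x^p - x)^p$ and also $f(x) \mid (x^p-x)^{\max_k e_k}$. I would first dispose of the case where $f$ has a single root of multiplicity $p$, i.e. $f(x) = (x-\alpha)^p = x^p - \alpha^p$; then $g(x) = -\alpha^p$ is constant (subsumed in the ``constant'' case, unless we also want to track $\alpha \ne 0$, but the statement only claims $g$ constant here, so we are fine). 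So from now on assume at least two distinct roots, hence $e_k \le p-1$ for all $k$.

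The key device is differentiation. Write $f(x) = \prod_k (x-k)^{e_k}$ and consider $f'(x)$. On one hand, $\deg f' \le p-1$ and in fact $f'(x) = g'(x)$ since $\tfrac{d}{dx} x^p = 0$ in characteristic $p$; thus $\deg f' < \tfrac p2 - 1$. On the other hand, $f'(x) = f(x) \cdot \sum_k \tfrac{e_k}{x-k}$, so $\gcd(f, f')$ is divisible by $\prod_{k : p \mid e_k}(x-k)^{e_k} \cdot \prod_{k : p \nmid e_k}(x-k)^{e_k - 1}$. Since each $e_k \le p-1$, no $e_k$ is divisible by $p$ (as $e_k \ge 1$), so $\gcd(f,f')$ is divisible by $\prod_k (x-k)^{e_k - 1} = f(x)/\mathrm{rad}(f)(x)$, which has degree $p - s$ where $s$ is the number of distinct roots. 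But $\gcd(f,f') \mid f'$ and $\deg f' < p/2$, so $p - s < p/2$, forcing $s > p/2$: the radical of $f$ has degree more than $p/2$. Combined with $s \le p$ and $\sum e_k = p$, this already says most roots are simple.

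Now comes the main step, which is also the main obstacle: leveraging $s > p/2$ together with $\deg g < p/2$ to pin down $f$ exactly. The idea is that $h(x) := \mathrm{rad}(f)(x) = \prod_{e_k \ge 1}(x-k)$ is a squarefree completely reducible polynomial of degree $s > p/2$ dividing $x^p - x$; write $x^p - x = h(x) \cdot h_1(x)$ with $h_1$ squarefree of degree $p - s < p/2$, namely $h_1(x) = \prod_{e_k = 0}(x-k)$. The relation $f = h \cdot \prod_k (x-k)^{e_k - 1}$, and the congruence $f(x) \equiv x^p \pmod{?}$... the cleanest route is: from $f \equiv g \pmod{x^p}$-ish bookkeeping, one shows that modulo $h_1$ we have $x^p - x \equiv 0$ hence $x \equiv x^p \equiv (\text{something})$; more usefully, $f(x) = x^p + g(x)$ and $x^p \equiv x \pmod{x^p - x}$, so $f(x) \equiv x + g(x) \pmod{h_1(x)}$ on the one hand, while on the other $h_1 \mid x^p - x$ and $h \mid f$ with $\gcd(h, h_1) = 1$... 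I would set $u(x) := x + g(x)$, of degree $< p/2 \le s = \deg h$, and argue that $h(x) \mid f(x) - \lambda(x)(x^p-x)$ for a suitable quotient, ultimately showing $h \mid u$ or $u$ is constant; since $\deg u < \deg h$, either $u \equiv 0$, giving $g(x) = -x$, or $u$ is a nonzero constant $c$, giving $g(x) = c - x$ — and this latter must then be reconciled, showing $c = 0$, again $g(x) = -x$, OR the degenerate branch where $g$ is constant. The delicate point throughout is bounding degrees carefully so that ``a polynomial of degree $< p/2$ is divisible by one of degree $> p/2$'' forces vanishing, and correctly separating the two conclusions $g$ constant versus $g(x) = -x$; I expect the bookkeeping around which roots are simple and which are not, and the precise congruence identity relating $f$, $h$, $h_1$ and $x^p - x$, to be where the real care is needed.
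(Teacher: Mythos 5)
Your strategy is sound and, once the last step is tidied, it does prove the proposition; it uses the same ingredients as the paper (the derivative to detect multiplicities, reduction modulo $x^p-x$, and degree counts), just assembled differently. The part through $s>\tfrac p2$ is correct: after discarding the one-root case $f=(x-\alpha)^p$ (which is exactly the case of constant $g$, since $g\equiv c$ gives $f=x^p+c=(x+c)^p$), $g$ is non-constant of degree $<p$, so $f'=g'\neq 0$, and your observation that $f/\mathrm{rad}(f)=\prod_k(x-k)^{e_k-1}$ divides $\gcd(f,f')$, combined with $\deg f'<\tfrac p2$, gives $\deg \mathrm{rad}(f)=s>\tfrac p2$. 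The final step, however, should not be hedged: no quotient $\lambda$ and no case analysis on $u$ is needed. The radical $h:=\mathrm{rad}(f)$ divides $f$ and, being squarefree with all roots in $\field$, it also divides $x^p-x$; hence it divides the difference $f(x)-(x^p-x)=x+g(x)=u(x)$, and since $\deg u\leq\max\{1,\deg g\}<s=\deg h$, this forces $u\equiv 0$, i.e.\ $g=-x$. In particular your branches ``$u$ a nonzero constant'' and ``$c=0$ must be reconciled'' never occur. For comparison, the paper packages the two divisibilities you use into the single statement $H\mid H'\cdot \overline H$, where $\overline H=H\bmod(x^p-x)=x+g$ (this is \cref{lemma:lacunary} with $S=1$), and one degree count then yields the dichotomy $H'=0$ (so $g$ constant) or $\overline H=0$ (so $g=-x$) without any split on the number of roots; that formulation also accommodates the extra factor $S$ and iterates to degree $np$ (\cref{prop:lacunary}), which is what the application to \cref{thm:main} requires, whereas your version is a perfectly good self-contained argument for the bare degree-$p$ statement.
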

This result is best-possible as the example $x^p+g(x)=x(x^{(p-1)/2}-1)^2$ shows. 
The proof (see \cref{prop:lacunary} below) is simple and it makes use of the following lemma (the case $S(x)=1$ suffices).

\begin{lemma}\label{lemma:lacunary}
	Let $P(x)\in\field[x]$ be a polynomial that factors over $\field[x]$ as $P(x)=R(x)S(x)$ with $R(x)$ completely reducible. Then
	\[
	P(x)\text{ divides } P'(x)\cdot \wb P(x) \cdot S(x), 
	\]
	where $P'$ is the derivative of $P$ and $\wb P$ is the polynomial of degree $<p$ such that $P(x)\equiv \wb P(x) \mod {x^p-x}$.
\end{lemma}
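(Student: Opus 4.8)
The plan is to reduce the asserted divisibility to a local statement at the roots of $R$, all of which lie in $\field$. Writing $P=RS$ and working in the integral domain $\field[x]$, the claim $P\mid P'\,\wb P\,S$ is equivalent, after cancelling the common factor $S$ from both sides, to the divisibility $R(x)\mid P'(x)\,\wb P(x)$. Since $R$ is completely reducible I would factor it as $R(x)=c\prod_i(x-\alpha_i)^{e_i}$ with $c\in\field^\times$, the $\alpha_i\in\field$ pairwise distinct and each $e_i\geq 1$; as the factors $(x-\alpha_i)^{e_i}$ are pairwise coprime, it is then enough to show $(x-\alpha_i)^{e_i}\mid P'\,\wb P$ for every $i$.

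For a fixed $i$ I would split the exponent as $e_i=(e_i-1)+1$ and treat the two pieces separately. Writing $P(x)=(x-\alpha_i)^{e_i}g_i(x)$ and differentiating gives $P'=e_i(x-\alpha_i)^{e_i-1}g_i+(x-\alpha_i)^{e_i}g_i'$, so $(x-\alpha_i)^{e_i-1}\mid P'$; this is valid in every characteristic, and in characteristic $p$ the first summand may simply vanish, which only helps. On the other hand, $\alpha_i\in\field$ forces $(x-\alpha_i)$ to divide $x^p-x=\prod_{k\in\field}(x-k)$, hence $\wb P(\alpha_i)=P(\alpha_i)$; and $P(\alpha_i)=0$ because $(x-\alpha_i)\mid R\mid P$. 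Thus $(x-\alpha_i)\mid\wb P$, and combining the two divisibilities yields $(x-\alpha_i)^{e_i}\mid P'\,\wb P$. Running over all $i$ and using coprimality gives $R\mid P'\,\wb P$, hence $P\mid P'\,\wb P\,S$.

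There is no real obstacle here; the only points that deserve a word of care are the cancellation of $S$ (legitimate because $\field[x]$ is a domain), the behaviour of the derivative in characteristic $p$ (handled above), and the fact that $\wb P$ vanishes at the roots of $R$ — which works precisely because those roots are $\field$-rational, i.e.\ roots of $x^p-x$, so that reducing $P$ modulo $x^p-x$ does not change its value there.
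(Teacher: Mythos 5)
Your proof is correct and is essentially the paper's argument: the paper performs the same split globally by taking $Q=\gcd(R,x^p-x)$ (the square-free part, i.e.\ one copy of each root, which divides $\wb P$ since $Q$ divides both $P$ and $x^p-x$) and noting that $R/Q$ (the remaining $e_i-1$ copies of each root) divides $P'$ by the Leibniz expansion, exactly matching your root-by-root valuation count. The only difference is presentational — local analysis at each $\alpha_i$ versus the global gcd formulation — so no further comparison is needed.
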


\begin{proof}
	Let $Q(x)=\op{gcd}(R,x^p-x)$ be the ``square-free part'' of $R(x)$. 
	It is clear from the Leibniz espansion that $\tfrac{R(x)}{Q(x)}$ divides $P'(x)$.  
	On the other hand $Q(x)$ divides both $P(x)$ and $X^p-x$, so $Q(x)$ divides $\wb P(x)$. 
	Therefore $P=Q\tfrac RQ S \divides  \wb P P' S$.
\end{proof}

In \cref{lemma:lacunary} we included the ``possibly non reducible'' factor $S(x)$ because, as Sz\H{o}nyi discovered \cite{szonyi}, this allows for more general applications to combinatorics.
In the following proposition we recover the R\'edei-Sz\H{o}nyi proposition \cite{redei,szonyi} for degree=$p$ almost-reducible lacunary polynomials, and we extend it naturally to degree=$np$ via an iterative procedure.

\begin{proposition}\label{prop:lacunary}
	Let $H(x) = x^{np} + g_1(x)x^{(n-1)p}+\dots +g_{n-1}x^p+g_n(x)\in\field[x]$ such that $\deg g_1 \leq A+1$ and $\deg g_k\leq B+k$ for all $1\leq k\leq n$ and some $A,B,n\in\N$. 
	Suppose also that $H(x)=R(x)S(x)$ with $R(x)$ completely reducible and that $A+B+n+\deg S<p$. Then $H(x)$ is a product of $n$ factors of the form $x^p-x$ or $x^p-\alpha$, $\alpha\in\field$.
\end{proposition}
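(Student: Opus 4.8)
The plan is to prove \cref{prop:lacunary} by induction on $n$, using \cref{lemma:lacunary} as the engine at each step and peeling off one factor of the form $x^p-x$ or $x^p-\alpha$ at a time. The base case $n=1$ is essentially R\'edei's \cref{prop:lacunary:redei} with a parameter: here $H(x)=x^p+g_1(x)$ with $\deg g_1\le A+1$ and $H=RS$ with $A+1+\deg S<p$. Applying \cref{lemma:lacunary} to $P=H$, we get $H\divides H'\cdot\wb H\cdot S$. Now $H'=g_1'(x)$ has degree $\le A$ (the $x^p$ term differentiates to $0$), and since $\deg H=p$ while $H\equiv \wb H \pmod{x^p-x}$, the reduction $\wb H$ has degree $<p$; in fact $\wb H = g_1(x)+x$ because $x^p\equiv x$. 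So $\deg(H'\wb H S)\le A+(A+1)+\deg S < p + (\text{something} < p)$... more carefully, $\deg H' + \deg\wb H + \deg S \le A + (A+1) + \deg S$, and I want this to be $<p$ or else to force $H'\wb H S = 0$. Wait — the hypothesis only gives $A+B+n+\deg S<p$, which for $n=1$ reads $A+B+1+\deg S<p$; I would need to track $B$ as well since $g_1$ could have degree up to $A+1$ but the "$B+k$" bound with $k=1$ gives $\deg g_1\le B+1$ too. I will use whichever bound is convenient and note $\deg\wb H\le A+1$. Then $\deg(H'\wb H S)< p\le\deg H$ unless $H'\wb H S=0$; since $H\divides H'\wb H S$, this forces $H'\wb H S=0$. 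As $S\neq 0$, either $H'=0$ or $\wb H=0$. If $H'=g_1'=0$ then $g_1$ is a constant (degree $<p$, so it is genuinely constant, not a $p$-th power), giving $H=x^p-\alpha$... wait, $H=x^p+g_1=x^p+c$, which is $x^p-(-c)$, of the right form. If $\wb H=0$ then $g_1(x)=-x$, giving $H=x^p-x$. This handles $n=1$.

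For the inductive step, suppose the claim holds for $n-1$ and let $H$ have degree $np$ as in the statement. Apply \cref{lemma:lacunary} to $P=H=RS$: then $H\divides H'\cdot\wb H\cdot S$. Here $H'$ has degree $\le np-1$ but the leading term and all "$x^{kp}$" terms die under differentiation in characteristic $p$, so actually $H'=g_1'x^{(n-1)p}+\dots$, of degree $\le A+(n-1)p$ — no wait, $g_1' x^{(n-1)p}$ has degree $\le A + (n-1)p$, and the lower blocks contribute degree $\le B+k+(n-k)p-1$... the dominant one is from the $g_1 x^{(n-1)p}$ block, degree $\le A+(n-1)p$. Hmm, but actually differentiating $g_k(x)x^{(n-k)p}$ gives $g_k'(x)x^{(n-k)p}$ of degree $\le (B+k)-1+(n-k)p$, maximized at $k=1$: $\le B+(n-1)p$. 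And $g_1'x^{(n-1)p}$ term: wait $g_1$ has $\deg\le A+1$ so $g_1'$ has $\deg\le A$, contributing $\le A+(n-1)p$. So $\deg H'\le \max(A,B)+(n-1)p$ roughly. Meanwhile $\wb H$ has degree $<p$. So $H'\wb H S$ has degree roughly $(n-1)p + (\max(A,B)) + p + \deg S$, which exceeds $np$ unless there's cancellation — so this time we cannot immediately conclude $H'\wb H S=0$. Instead, the right move is the one used by R\'edei and Sz\H{o}nyi: the factor $x^p-x$ (the square-free part of... ) The key structural fact is that $\gcd(H, x^p-x)$ is a nontrivial common factor; more precisely, write $Q=\gcd(R,x^p-x)$ as in the proof of \cref{lemma:lacunary}, so $Q\divides\wb H$, and $R/Q\divides H'$, giving $H = Q\cdot(R/Q)\cdot S\divides \wb H\cdot H'\cdot S$, i.e. the cofactor $H/(Q\cdot(R/Q)\cdot S) = H/(R\cdot S) =1$... that's circular. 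Let me reconsider: the real content is that $x^p-x$ divides $H$ a predictable number of times, and one extracts $H = (x^p-x)H_1 + (\text{correction})$ or rather: since every root of $R$ is in $\field$ (complete reducibility) and $H=RS$, one argues that $x^p-x$ divides $R$, hence $H = (x^p-x)\cdot\frac{R}{x^p-x}\cdot S$ when $R$ is squarefree — but $R$ need not be squarefree.

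The cleaner inductive approach, following Sz\H{o}nyi's iteration, is this: from $H\divides \wb H H' S$ and the degree bookkeeping, one shows $\wb H H' S$ is divisible by $H$ but has degree $< \deg H + (\text{small})$, forcing $\wb H H' S = H\cdot c(x)$ for a polynomial $c$ of small degree $\le A+B+\deg S+\dots$. Then comparing: $H = x^{np}+\dots$, and one deduces that $x^p-x \divides H$ or $x^p-\alpha\divides H$ by examining the structure — specifically, by looking at $\wb H$ (degree $<p$, completely reducible since it divides... hmm $\wb H$ need not divide $H$). Actually the standard trick: $H\equiv \wb H\pmod{x^p-x}$, and if we can show $\wb H$ is lacunary of R\'edei type we'd recurse; but $\wb H$ has degree $<p$ only. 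I think the correct plan is: show that $g_1(x) + x$, or $g_1(x)$, must be divisible by the relevant polynomial, then factor $H = (x^p - x)\cdot H^\flat$ or $H=(x^p-\alpha)\cdot H^\flat$ where $H^\flat$ is a lacunary polynomial of degree $(n-1)p$ with parameters $A,B,n-1$ and $\deg S$ unchanged (or $S$ replaced by a divisor), then apply the inductive hypothesis to $H^\flat$. The main obstacle — and the step I'd spend the most care on — is precisely showing that $H$ has a single factor of the form $x^p-x$ or $x^p-\alpha$ splitting off cleanly, i.e., controlling the multiplicities so that after dividing, the quotient still satisfies the degree hypotheses $\deg g_1^\flat\le A+1$, $\deg g_k^\flat\le B+k$. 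This requires carefully analyzing the coefficient $g_1$: I expect that $\deg g_1\le A+1$ forces, via the $\wb H H' S = Hc$ relation reduced mod $x^p - x$, that $\wb H$ (which has the form $g_1 + x + (\text{lower blocks reduced})$) is divisible by $x^p-x$'s square-free structure, pinning down one linear-in-$x^p$ factor. The degree inequality $A+B+n+\deg S<p$ is exactly what is needed to run this $n$ times without the error terms overwhelming the leading behaviour, so the induction should close.

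\begin{remark}
In writing the full proof I would state the inductive hypothesis with $\deg S$ allowed to be replaced by $\deg S'$ for any divisor $S'\divides S$ (so the hypothesis $A+B+n+\deg S<p$ is only used at the top and weakens as we descend), extract one factor $x^p-x$ or $x^p-\alpha$ via the $n=1$ analysis applied to a suitable reduction, and verify the quotient $H^\flat$ of degree $(n-1)p$ retains the coefficient-degree bounds with the same $A,B$ and $n$ decreased by one.
\end{remark}
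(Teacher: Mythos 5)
There is a genuine gap, and it is exactly at the point where you give up on the direct degree count: in the inductive step you bound $\deg \wb H$ only by $p$, conclude that $\deg(H'\cdot\wb H\cdot S)$ may exceed $np$, and then wander through several alternative plans (extracting a single factor ``cleanly'', gcd arguments you yourself call circular) without completing any of them. The missing idea is that the lacunary shape of $H$ controls $\wb H$ much more tightly than $\deg\wb H<p$: since every exponent occurring in $H$ is of the form $(n-k)p+(\text{at most }B+k)$ and $B+n<p$, reducing modulo $x^p-x$ sends $x^{(n-k)p}$ to $x^{n-k}$ and yields $\wb H=x^n+g_1(x)x^{n-1}+\dots+g_n(x)$, so $\deg\wb H\le n+B$. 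Combined with $\deg H'\le (n-1)p+A$ (the blocks $x^{(n-k)p}$ have zero derivative in characteristic $p$), this gives
\[
\deg\bigl(H'\cdot\wb H\cdot S\bigr)\le (n-1)p+A+B+n+\deg S< np=\deg H,
\]
so \cref{lemma:lacunary} forces $H'=0$ or $\wb H=0$ for \emph{every} $n$, not only $n=1$. This is how the paper's proof runs, and it removes the obstacle you spent most of your effort on.

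Two further points your sketch does not cover. First, the case $H'=0$ must be treated for general $n$: then $H=G^p$ with $\deg G=n$, and $G$ is completely reducible because otherwise $S$ would be divisible by the $p$-th power of a nonlinear polynomial, impossible since $\deg S<p$; this yields $H=\prod_{i=1}^n(x-\alpha_i)^p$, a configuration your ``peel off one factor'' plan never produces in one step. Second, in the case $\wb H=0$ the descent is a short explicit computation, not a delicate multiplicity analysis: writing $H=(x^p-x)\HH$, long division gives $\HH=x^{(n-1)p}+\hg_1(x)x^{(n-2)p}+\dots+\hg_{n-1}(x)$ with $\hg_k(x)=x^k+\sum_{j=1}^k g_j(x)x^{k-j}$, hence $\deg\hg_1\le A+1$ and $\deg\hg_k\le B+k$, and $\HH=\widetilde R\cdot\widetilde S$ with $\widetilde R\divides R$ completely reducible and $\widetilde S\divides S$, so the inductive hypothesis (with the same $A,B$ and $n-1$) applies verbatim; your worry about the quotient losing the coefficient-degree bounds does not materialize. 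As written, your argument proves only the base case $n=1$.
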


\begin{proof}
	We proceed by induction on $n$. 
	We have that $\deg H'\leq (n-1)p + A$ and $\deg \wb H \leq n+ B$, hence 
	\[
	\deg H  = np > (n-1)p + A + B + n + \deg S \geq \deg (H'\cdot \wb H \cdot S). 
	\]
	However we also have $H\divides H'\cdot \wb H \cdot S$ by \cref{lemma:lacunary}, so we must either have $H'=0$ or $\wb H =0$. 
	If $H'=0$ then $H=G^p$ for some $G\in\field[x]$ of degree $n$. This polynomial $G$ must be completely reducible: otherwise $S$ would be divisible by the $p$-th power of some non-linear polinomial, but this is impossible because $\deg S<p$. We conclude, in case $H'=0$, that $H = \prod_{i=1}^n (x- \alpha_i)^p$ for some $\alpha_1,\ldots,\alpha_n\in\field$. 
	If $\wb H=0$ instead, then $H(x) = (x^p-x)\cdot \HH(x)$ for some $\HH(x)\in \field[x]$. 
	If $n=1$ then $\HH(x)=1$ and we are done. 
	Otherwise, we use an induction on $n$ as follows. 
	By polynomial long division we have 
	\[
	\HH(x) = x^{(n-1)p} + \hg _1(x) x^{(n-2)p} + \dots +\hg_{n-2}(x) x^p + \hg_{n-1}(x) 
	\]  
	with 
	\[
	\hg_k(x) = x^k + \sum_{j=1}^{k} g_j(x) x^{k-j}
	\]
	for all $1\leq k\leq n-1$. 
	We notice that $\deg\hg_1 \leq A+1$ and $\deg \hg_k\leq B+k$ for all $1\leq k\leq n-1$. Moreover $\HH(x) = \widetilde R(x)\cdot \widetilde S(x)$ for some $\widetilde R\divides R$ and $\widetilde S\divides S$. 
	In particular $A+B+(n-1)+\deg \widetilde S<p$ and so by induction we have that $\HH$ is a product of $n-1$ factors of the form $x^p-x$ or $x^p-\alpha$, $\alpha \in \field$.  
	Since $H(x) = (x^p-x)\cdot \HH(x)$, the proposition follows. 
\end{proof}

\subsection{The RS polynomial is lacunary}\label{sec:lacunary:RS}

We now prove that the RS-polynomial is ``lacunary'' (i.e. many of its $x$-coefficients vanish) if $U$ has few $U$-special directions. 
The induction in the following proof is a little technical, but it is executed according to the following principles
\begin{itemize}
\item
The first proposition of \cref{sec:poly:RS} implies that the specialization of the RS-polynomial at $y=m$ is ``lacunary'' when $m$ is not a $U$-rich direction.  
In other words, $h_j(m)=0$ for most $j\leq np$.
\item 
We use the second proposition of that section to show that $\partial_y^\ell h_j(m)=0$ for suitable $\ell$ and $j$, when $m$ is a $U$-generic direction. 
In other words, that these $h_j$ vanish with a certain multiplicity. 
\item
By \cref{RS:degree} we have an upper bound for the degree:  $\deg_y h_j(y)\leq j$. As a consequence, if the $U$-generic directions are numerous enough, then several $x$-coefficients $h_j$ of $\RS$ vanish identically. 
\end{itemize}

We recall that a direction $m$ is non-vertical if $m\neq \infty$. 

\begin{proposition}\label{RS:lacunary}
Let $U\subseteq \plane$ with $(n-1)p<\#U\leq np$ for some $n\in\N$, write 
\begin{equation}\label{eq:HG}
\RS(x,y) = x^{np} + G_1(x,y) x^{(n-1)p} +\dots + G_{n}(x,y)
\end{equation}
for some $G_j(x,y)$ with $\deg_x G_j<p$, and let $\DD$ (resp. $\DDrich$) be the number of the non-vertical $U$-special (resp. $U$-rich) directions. 
Then for all $1\leq j\leq n$ we have  
\begin{equation}\label{eq:RS:lacunary}
\deg_x G_j\leq \uDrich + (j-1)\uD, 
\end{equation}
where $\uD:=\max\{\DD,1\}$ and $\uDrich:=\max\{\DDrich,1\}$. 
\end{proposition}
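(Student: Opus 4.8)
The plan is to reduce the statement to a degree count, performed coefficient by coefficient, exploiting the two propositions of \cref{sec:poly:RS}. Fix $j$ with $1\leq j\leq n$, and consider the $x$-coefficients of $G_j(x,y)$: writing $G_j(x,y)=\sum_{i} c_{j,i}(y) x^i$, each $c_{j,i}$ is (up to relabeling) one of the $h_k(y)$ with $k = np - (j-1)p - i$, so by \cref{RS:degree} we know $\deg_y c_{j,i}(y) \leq k = (n-j+1)p - i$. The goal \cref{eq:RS:lacunary} is equivalent to showing $c_{j,i}(y)\equiv 0$ whenever $i > \uDrich + (j-1)\uD$. So I would fix such an $i$ and argue that the polynomial $c_{j,i}(y)$ has too many roots (counted with multiplicity) to be nonzero.

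The first step is to locate the roots. Let $m\neq\infty$ be a $U$-generic direction. By \cref{RS}, $\RS(x,m)=(x^p-x)^n$; matching this against the expansion \cref{eq:HG} and using $\deg_x G_\alpha < p$, one sees that $G_\alpha(x,m)$ is the $x$-reduction mod $x^p-x$ of the $x^{(n-\alpha)p}$-coefficient of $(x^p-x)^n$, which for $1\leq\alpha\leq n$ forces $G_\alpha(x,m)$ to be a constant (namely $\binom{n}{\alpha}(-1)^\alpha$ times $x$ reduced appropriately — in fact after reduction it is simply a scalar, or for the $x$-part, handled by the generic case); the upshot is that for $2\leq i<p$ the coefficient $c_{j,i}(m)=0$. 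This handles ``order-one'' vanishing. For higher-order vanishing I would differentiate: \cref{RS:y} gives $(x^p-x)^{n-\alpha}\mid (\partial_y^\alpha\RS)(x,m)$ for every $\alpha\leq n$ when $m$ is generic. Expanding $\partial_y^\alpha\RS$ via \cref{eq:HG} — the $x^{(n-\ell)p}$-block contributes $(\partial_y^\alpha G_\ell)(x,m)$ plus cross terms $x^{p}$ times lower blocks — and comparing $x$-degrees with the divisibility by $(x^p-x)^{n-\alpha}$, one extracts that $(\partial_y^\alpha G_j)(x,m) = 0$ for the relevant range of the $x$-exponent $i$, precisely as long as $i$ exceeds a threshold that grows linearly in the number of ``derivative steps'' we can afford, which is governed by how many generic directions are available. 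Concretely this says each generic $m$ is a root of $c_{j,i}(y)$ of multiplicity at least $j$ (using derivatives up to order roughly $j-1$ at the $j$-th block), while each non-vertical special direction that is not rich may fail to be generic but, by the structure, $c_{j,i}$ still vanishes there to some lower order, and rich directions are simply excluded. Summing: if $\DD$ is the number of non-vertical special directions and $\DDrich$ the number of rich ones, the number of non-vertical generic directions is $p - \DD$, so $c_{j,i}(y)$ has at least $j(p-\DD)$ roots with multiplicity, plus contributions from the $\DD-\DDrich$ poor directions, totaling at least
\[
j(p-\uD) + (\text{something}) \;>\; (n-j+1)p - i
\]
whenever $i > \uDrich + (j-1)\uD$; since the left side exceeds $\deg_y c_{j,i}$, we conclude $c_{j,i}\equiv 0$.

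The induction is really on $j$: having shown $\deg_x G_1\leq\uDrich$, one peels off information about $G_1$ and feeds it forward so that the derivative-divisibility bookkeeping for $G_2,\dots,G_n$ stays clean — this is the ``technical induction'' the authors warn about. The main obstacle I anticipate is precisely this bookkeeping: \cref{RS:y} controls $(\partial_y^\alpha\RS)(x,m)$ as a whole, but to read off a statement about an individual block $G_j$ one must show the cross terms $x^p\cdot(\partial_y^{\alpha-t}G_{j-s})$ coming from Leibniz do not spoil the degree count, which requires knowing that the lower-indexed $G$'s already have small $x$-degree — hence the induction — and carefully tracking that the $(x^p-x)^{n-\alpha}$ divisibility, which has $x$-degree $(n-\alpha)p$, pins down the top $\alpha p$ worth of coefficients in a way that isolates the $j$-th block only for the correct $i$-range. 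Matching the arithmetic $j(p-\uD)+(\DD-\DDrich)\geq \cdots$ so that it crosses the threshold $(n-j+1)p - i$ exactly at $i = \uDrich+(j-1)\uD+1$ is the delicate counting step, but it is a routine (if fiddly) verification once the vanishing orders are established.
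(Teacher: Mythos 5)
Your overall strategy is the same as the paper's: order-one vanishing of the coefficients at the non-rich directions via \cref{RS}, higher-order vanishing at the non-special directions via \cref{RS:y} together with a block-by-block degree comparison (with induction on the lower blocks to control the cross terms), and finally a roots-versus-degree count based on \cref{RS:degree}. However, as written your final count fails because of a concrete indexing error. The coefficient of $x^i$ in $G_j$ sits in front of $x^{(n-j)p+i}$ in $\RS$, so it equals $h_k$ with $k=jp-i$, and \cref{RS:degree} gives $\deg_y c_{j,i}\leq jp-i$ --- not $(n-j+1)p-i$ as you state. With your bound, the inequality you defer as a ``routine verification'', namely $j(p-\uD)+(\DD-\DDrich)>(n-j+1)p-i$, is impossible whenever $j\leq n/2$: the left-hand side is at most $jp$, while the right-hand side exceeds $(n-j)p$ for every $i<p$. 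With the corrected bound $jp-i$, the same root count $j(p-\uD)+(\uD-\uDrich)$ exceeds the degree precisely when $i>\uDrich+(j-1)\uD$, which is exactly the threshold in \cref{eq:RS:lacunary}, and this is how the paper concludes.

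Two further points need attention. First, the step you yourself flag as the main obstacle --- showing that for a non-special direction $m$ the divisibility $(x^p-x)^{n-\alpha}\mid(\partial_y^\alpha \RS)(x,m)$, combined with the already-established bounds $\deg_x G_\ell\leq\uDrich+(\ell-1)\uD$ for $\ell\leq\alpha$, yields $\deg(\partial_y^{\alpha}G_j)(x,m)\leq\uDrich+(\alpha-1)\uD+(j-\alpha)$ for $j>\alpha$ --- is the actual heart of the proof; the paper carries it out by writing $(\partial_y^{\alpha}\RS)(x,m)=(x^p-x)^{n-\alpha}\bigl(K_1(x)x^{(\alpha-1)p}+\dots+K_\alpha(x)\bigr)$, first bounding the degrees of the $K_j$ by matching the top $\alpha$ blocks against the inductive bounds, and only then reading off the lower blocks. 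You assert this transfer but do not perform it, so your proposal is at this point a correct plan rather than a proof. Second, at a non-rich direction $m$ one has $G_j(x,m)=(-1)^j\binom{n}{j}x^j$, a monomial of degree $j$, not a constant; hence the simple vanishing $c_{j,i}(m)=0$ holds for $i>j$, not for all $i\geq 2$. This is harmless in the end, because the relevant $i$ exceed $\uDrich+(j-1)\uD\geq j$, but it must be said to make the root count legitimate.
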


\begin{proof}
If $m$ is not a $U$-rich direction, then by \cref{RS} we have $\RS(x,m)=(x^p-x)^n$, and so $\deg G_j(x,m)\leq j$ for all $j\leq n$. 
In particular for every $1\leq \beta \leq p-2$ and every non-$U$-rich direction $m$ we have that $h_\beta(m)=0$, because $\deg G_1(x,m)\leq 1$. 
Therefore these polynomials $h_\beta$ vanish at $p-\uDrich$ distinct elements of $\field$. 
However, we have $\deg h_\beta(y)\leq \beta$, and so $h_\beta(y)$ is the zero polynomial for $\beta=1,\ldots,p-\uDrich-1$. 
In other words 
\begin{equation}\label{eq:G1}
	\deg_x G_1(x,y)\leq \uDrich.  
\end{equation}
We are now going to prove \cref{eq:RS:lacunary} by induction on $j$, but for clarity first we show it for $j=2$. 
We consider the derivative $\partial_y \RS$ of the RS-polynomial:
\begin{equation*}
(\partial_y \RS)(x,y) = (\partial_y G_1)(x,y)\cdot x^{(n-1)p} + \dots + (\partial_y G_n)(x,y)\cdot x^0.
\end{equation*}
We know by \cref{RS:y} that $(x^p-x)^{n-1}\divides (\partial_y \RS)(x,m)$ for all non-$U$-special direction $m\neq \infty$, so that we can write $(\partial_y \RS)(x,m) = K_1(x) \cdot (x^p-x)^{n-1}$ or 
\begin{equation*}
(\partial_y \RS)(x,m) = K_1(x)\cdot x^{(n-1)p} 
+\dots + (-1)^n K_1(x) x^n, 
\end{equation*}
for some $K_1(x)$ that depends on $m$. 
If we compare the first term of the two displayed equations above, we get $K_1(x)=\partial_y G_1(x,m)$ and so $\deg K_1(x)\leq \uDrich$ because of \cref{eq:G1}. 
By comparing the other terms of the expansions above we deduce that 
$\deg \partial_y G_j(x,m) \leq \uDrich + j-1$ for all $1\leq j\leq n$. 
In particular, this estimate for $j=2$ implies that $\partial_y h_{p+\beta}(m)=0$ for all $1\leq \beta\leq p-\uDrich-2$ and all non-vertical non-$U$-special direction $m$. 
Summing up, for these values of $\beta$, the polynomial $h_{p+\beta}$ vanishes at $p-\uDrich$ elements $m\in\field$ (i.e. the non-$U$-rich directions, by the initial discussion, because $\deg G_2(x,m)\leq 2\leq \uDrich+1$) and it vanishes with double multiplicity at $p-\uD$ elements $m\in\field$ (i.e. the non-$U$-special directions, by the last discussion, because $\partial_y G_2(x,m) \leq \uDrich+1$). 
Since $\deg h_{p+\beta} \leq p+\beta$ by \cref{RS:degree}, this forces $h_{p+\beta}\equiv 0$ for all $1\leq \beta\leq p-\uDrich-\uD-1$. 
In other words, 
\[
\deg_x G_2(x,y)\leq \uDrich+\uD.
\]
We now prove by induction on $\alpha=2,\dots,n$ that
\begin{enumerate}[(i)]
	\item $\deg_x G_\alpha(x,y) \leq \uDrich + (\alpha-1) \uD$;
	\item $\deg (\partial_y^{\alpha-1} G_j)(x,m) \leq \uDrich + (\alpha-2) \uD + j-\alpha+1$ for all $\alpha\leq j\leq n$ and all non-$U$-special direction $m\neq \infty$.
\end{enumerate}
We already proved these statements for $\alpha=2$, so now we suppose they are true up to a certain $\alpha=\alpha_0$ with $2\leq \alpha_0\leq n-1$ and we aim to prove them for $\alpha =\alpha_0+1$, with the same strategy as before.
For all non-$U$-special direction we know by \cref{RS:y} that $(x^p-x)^{n-{\alpha_0}}\divides (\partial_y^{\alpha_0} \RS)(x,m)$, so
\begin{align}
(\partial_y^{\alpha_0} \RS)(x,m) 
&= (x^p-x)^{n-{\alpha_0}} \cdot (K_1(x)x^{({\alpha_0}-1)p} + \dots + K_{\alpha_0}(x))\\
&=\sum_{j=1}^n (\sum_{i=0}^{\min\{j,\alpha_0\}} (-1)^{j-i} \binom {n-\alpha_0}{j-i}K_{i}(x) x^{j-i})\cdot  x^{(n-j)p} \label{eq:k}
\end{align}
for some $K_j\in\field[x]$ with $\deg K_j\leq p-1$. 
On the other hand we have 
\begin{equation}\label{eq:y}
(\partial_y^{\alpha_0} \RS)(x,m) 
= (\partial_y^{\alpha_0} G_1)(x,m)\cdot x^{(n-1)p} + \dots + (\partial_y^{\alpha_0} G_n)(x,m)\cdot x^0
\end{equation}
and since $\deg_x G_j(x,y) \leq \uDrich+(j-1)\uD$ for $j\leq {\alpha_0}$ by statement (i), we also have 
\[
\deg (\partial_y^{\alpha_0} G_j)(x,m) \leq \uDrich+(j-1)\uD
\] 
for $j\leq {\alpha_0}$. 
If we compare the first ${\alpha_0}$ terms of \cref{eq:k,eq:y} 
we discover that $\deg K_j\leq \uDrich+(j-1)\uD$ as well. 
By comparing the remaining terms we obtain $\deg (\partial_y^{{\alpha_0}} G_j)(x,m) \leq  \uDrich+(\alpha_0-1)\uD + (j-{\alpha_0})$ for all $j\geq {\alpha_0}+1$, that is statement (ii) for $\alpha = {\alpha_0}+1$. 
Now we consider the polynomial $h_{{\alpha_0} p + \beta}(y)\in\field[y]$ for 
\[
1\leq \beta\leq p-\uDrich - {\alpha_0}\uD-1.
\]
We have that $\deg h_{\alpha_0 p + \beta}\leq \alpha_0 p + \beta$, but also that $h_{\alpha_0 p + \beta}$ has at least $p-\uDrich$ zeros in $\field$, of which at least $p-\uD$ have multiplicity at least $\alpha_0+1$. 
We have
\[
{\alpha_0} p + \beta < p-\uDrich+ {\alpha_0}(p-\uD),
\]
so these $h_{\alpha_0 p + \beta}$ are identically zero as polynomials. 
This is equivalent to statement (i) for $\alpha= \alpha_0+1$.
\end{proof}

\subsection{Outcome of the polynomial method}\label{sec:poly:output}
 
 We now combine \cref{prop:lacunary} and \cref{RS:lacunary} to get some nontrivial information on the number of intersections between the set $U$ and the lines along a $U$-rich direction. 
 
 As in \cref{sec:special:0} let $U\subseteq\plane$ be a set with cardinality $\# U = np - r$ for some $1\leq n < p$ and $0\leq r<p-n$. 
 We let $\D$ be the number of $U$-special directions, let  $\Drich$ be the number of $U$-rich directions and assume that 
 \begin{equation}\label{hyp:dir:again}
 \D\leq 1 + \frac {p-r}{n+1}
 \end{equation}
as in \cref{hyp:dir}. 
 We now further assume that $\Drich\geq 2$ and that the vertical direction $\infty$ is a $U$-rich direction. 
 Notice that $\D\geq \Drich\geq 2$ and that we have $\uDrich=\DDrich=\Drich-1$ and $\uD=\DD=\D-1$ in \cref{RS:lacunary}. 
 Let now $m\neq \infty$ be another $U$-rich direction and seek to apply \cref{prop:lacunary} with $H=\RS(x,m)$. 
 We have $H(x)=R(x)S(x)$ where $R(x)=R_U(x,m)$ is the R\'edei polynomial and $S_{U,n}(x,m)$ is the Sz\H{o}nyi complement. 
 In particular, $R(x)$ is completely reducible by its definition. 
 We write $\RS$ as in \cref{eq:HG} and we notice, using \cref{RS:lacunary} and the inequality $\Drich\leq \D$, that 
 \begin{equation}\label{end:G:AB}
 	\deg G_1(x,m)\leq A+1
 	\quad\quad\text{ and }
 	 \deg G_j(x,m)\leq B+j
 \end{equation}
 for $A=\D-2$, $B=n(\D-2)$ and all $j=1,\ldots,n$. 
 Moreover we have that $\deg S=r$ and so
 \begin{equation}\label{end:AB}
 	 A+B+n+\deg S = (n+1)(\D-2) + r <p
 \end{equation}
 by \cref{hyp:dir:again}.  
 To sum up, we have that the hypotheses of \cref{prop:lacunary} are fulfilled. 
 Therefore the polynomial $H(x)$ can be factored as 
 \begin{equation}\label{eq:H:product}
 H(x) = (x^p-x)^{n-\richm} \prod_{i=1}^{\richm} (x-\alpha_i)^p
 \end{equation}
 for some $0\leq \richm\leq n$ and some $\alpha_i\in\field$. 
 It is easy to see that 
 \begin{lemma}\label{end:alpha}
 	The elements $\alpha_i$ are pairwise distinct and the lines $\ell_i =\{(u,v): v = m u - \alpha_i\}$ for $i=1,\ldots,\richm$ are the  $U$-rich lines with slope $m$. 
 \end{lemma}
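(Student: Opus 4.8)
The plan is to read off the multiplicities of the linear factors on both sides of the identity $H(x)=\RS(x,m)=R_U(x,m)\cdot S_{U,n}(x,m)$ and to compare them with the explicit factorization \cref{eq:H:product}. For $k\in\field$ write $\ell_{m,k}=\{(u,v):v=mu-k\}$ and let $L_k:=\#(\ell_{m,k}\cap U)$, which by \cref{redei:multiplicity} is the multiplicity of $x-k$ in $R_U(x,m)$; let $\mu_k$ be the multiplicity of $x-k$ in $H(x)$. All three of $R_U(x,m)$, $H(x)$, and hence the quotient $S_{U,n}(x,m)=H(x)/R_U(x,m)$, split completely over $\field$, and the multiplicity of $x-k$ in $S_{U,n}(x,m)$ is $\mu_k-L_k\geq 0$; summing over $k\in\field$ therefore gives $\sum_k(\mu_k-L_k)=\deg_x S_{U,n}(x,m)=np-\#U=r$. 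On the other side, writing $e_k:=\#\{i:\alpha_i=k\}$, the factorization \cref{eq:H:product} yields $\mu_k=(n-\richm)+p\,e_k$ for every $k$.

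Next I would extract the two needed facts. To see that $\alpha_1,\dots,\alpha_\richm$ are pairwise distinct, suppose $e_k\geq 2$ for some $k$; then, using $L_k\leq p$ (a line has $p$ points) and $\richm\leq n$, the single term $\mu_k-L_k\geq (n-\richm)+p(e_k-1)\geq p$ would exceed $r<p$, which is impossible since the nonnegative quantities $\mu_k-L_k$ sum to $r$. Hence every $e_k\leq 1$. To identify the rich lines: if $k\notin\{\alpha_1,\dots,\alpha_\richm\}$ then $\mu_k=n-\richm$, so $L_k\leq n<\#U/p+1$ (because $r<p$) and $\ell_{m,k}$ is not $U$-rich; while for $k=\alpha_i$ we have $\mu_{\alpha_i}=(n-\richm)+p$ and therefore $L_{\alpha_i}\geq\mu_{\alpha_i}-r=(n-\richm)+p-r\geq n+1\geq\#U/p+1$, using the standing hypothesis $p-r\geq n+1$ and $\richm\leq n$, so each $\ell_{m,\alpha_i}$ is $U$-rich. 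Thus the $U$-rich lines with slope $m$ are exactly $\ell_{m,\alpha_1},\dots,\ell_{m,\alpha_\richm}$, and in particular their number is the $\richm$ appearing in \cref{eq:H:product}, consistently with the notation fixed in \cref{sec:special:1}.

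The individual computations are all one-liners, so there is no real obstacle; the only point requiring care is the bookkeeping that makes everything fit, namely that $H/R_U(x,m)$ is a genuine polynomial of degree exactly $r$ that splits over $\field$, and that the contribution of the $\alpha_i$ alone already accounts for the whole ``defect'' $r$ — it is this that simultaneously forces the distinctness of the $\alpha_i$ and the lower bound on $L_{\alpha_i}$. Note that \cref{JL:poly:method} is not needed for this lemma.
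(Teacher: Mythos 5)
Your proof is correct and follows essentially the same route as the paper: both compare the multiplicities of the linear factors of $R_U(x,m)$, $S_{U,n}(x,m)$ and $H(x)$ against the explicit factorization \cref{eq:H:product}, using $\#(\ell_{m,k}\cap U)\leq p$ and $\deg S=r<p-n$ to force distinctness of the $\alpha_i$ and the bound $L_{\alpha_i}\geq p-r\geq n+1$, and using $\mu_k=n-\richm\leq n$ to rule out richness of the other lines. Your bookkeeping through the multiplicities $\mu_k-L_k$ in $S$ is just a mild repackaging of the paper's ``multiplicity at most $p+r<2p$'' argument, and you are right that \cref{JL:poly:method} is not (and must not be) invoked.
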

 
 \begin{proof}
 	By \cref{redei:multiplicity} we have that for all $t\in\field$, the multiplicity of $x-t$ in the factorization of $R(x)$ is the number of points of $U$ on the line $\ell_{m,t}=\{(u,v):v=mu-t\}$. 
 	Since moreover $R(x)\divides H(x)$, we have that every line $\ell_{m,t}$ other than the $\ell_i$'s meets $U$ in at most $n$ points, so it is not $U$-rich. 
 	Conversely, we notice that $S(x)$ has degree $r$, therefore the multiplicity of $x-\alpha_i$ in the factorization of $R(x)$ is at least
 	\[
 	p - r \geq n+1
 	\] 
 	because by hypothesis $r<p-n$. This means that each line $\ell_i$ for $i=1,\ldots,\richm$ is $U$-rich. 
 	In addition to this, every line has no more than $p$ points, so we have that the multiplicity of every linear factor of $R(x)$ is at most $p$. 
 	Since $\deg S(x)=r<p$, we deduce that all linear factor of $H(x)$ appear with multiplicity at most $p+r<2p$.  
 	This implies that we cannot have $\alpha_i=\alpha_j$ for $i\neq j$, otherwise by \cref{eq:H:product} the multiplicity of $x-\alpha_i$ in the factorization of $H(x)$ would be at least $2p$.
 \end{proof}

In particular, $\richm$ is the number of $U$-rich lines with slope $m$, as in \cref{sec:special:1}. Since $m$ is a $U$-rich direction, we have that $\richm\geq 1$. 
We already remarked that $\richm\leq n$, moreover we can have $\richm=n$ only if the whole of the set $U$ is contained in the union of the $\richm=n$ parallel lines $\ell_1,\ldots,\ell_{\richm}$ described in \cref{end:alpha}. 
Finally, we observe that 
\begin{equation}\label{end:r}
	r\geq \richm(n-\richm).
\end{equation}
Indeed, every factor $x-\alpha_i$ appears with multiplicity $p+n-\richm$ in the factorization of $H(x)$. Since every linear factor of $R(x)$ has multiplicity at most $p$, we have that $(x-\alpha_i)^{n-\richm}\divides S(x)$ for all $i=1,\ldots, \richm$. 
Since $\deg S = r$, we get \cref{end:r}. 
We are now able to prove the result we used as a blackbox in \cref{sec:special}.

\begin{proof}[Proof of \protect\cref{JL:poly:method}]
	Let $U\subseteq\plane$ and let $m$ be any $U$-rich direction. 
	If $\Drich\geq 2$ then there is some other $U$-rich direction $m'$ and by a linear change of coordinates we may assume that $m'=\infty$, and so also $m\neq \infty$. Then the discussion above implies the lemma. 
	If $\Drich=1$ but $\D\geq 2$, we choose some $U$-special direction $m'\neq m$ and by a linear change of coordinates we assume that $m'=\infty$. Then we repeat the above discussion, the main difference being that now $\uDrich=\DDrich=\Drich$ in \cref{RS:lacunary}. Moreover we have $\Drich\leq \D-1$, hence we still have \cref{end:G:AB} with $A=\D-2$ and $B=n(\D-2)$ and so also the inequality \cref{end:AB}. The rest of the discussion is the same as in the case before. 
	Finally, if $\Drich=\D=1$ we simply assume by a linear change of coordinates that $m\neq \infty$. 
	Again, we repeat the reasoning of this subection. 
	Now we have, by \cref{RS:lacunary}, that \cref{end:G:AB} holds with $A=B=0$ and so \cref{end:AB} is replaced by the following computation 
	\[
	A+B+n\deg S = n+r < p, 
	\]
	which holds by the assumption $r<p-n$.  
	Therefore, the hypotheses of \cref{prop:lacunary} are still fulfilled and the lemma follows also in this case. 
\end{proof}

 \begin{remark}\label{rmk:megyesi}
 	The trick of reducing to the case where $\infty$ is $U$-special is the contribution by Megyesi mentioned in the introduction. 
 \end{remark} 

\section{Completing the proof of the main theorem}\label{sec:case}

In this section we will prove \cref{thm:main} when $p-r\leq n$. Since in this case
\[
 \left\lceil \frac{p-r+n+2}{n+1}\right\rceil = 2, 
\]
we need to show that $U$ is either contained in the union of $n$ lines or that there are at least two $U$-special directions. 
 We can assume that $n<p$ because otherwise $U$ is trivially contained in the union of $n=p$ lines. 
 In particular the assumptions $n<p$ and $p-r\leq n$ imply that $r\neq 0$. 
 Now, we suppose that there is at most one $U$-special direction $m_0\in\puno$. 
 If there is no $U$-special direction, we choose $m_0$ arbitrarily. 
 We make the following observation.
 
 \begin{lemma}\label{last:lemma}
  Every line with slope $m_0$ is either contained in $U$ or it meets $U$ in at most $p-r$ points.
 \end{lemma}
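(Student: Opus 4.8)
The plan is a short double-counting argument; no input from the polynomial method is needed. First I would record that the hypotheses $n<p$ and $p-r\leq n$ force $r\geq p-n>0$, so that $\theta:=\#U/p=n-r/p$ lies strictly between $n-1$ and $n$ and $\theta-1$ lies strictly between $n-2$ and $n-1$. Consequently a line is $U$-poor precisely when it meets $U$ in at most $n-2$ points, and therefore any line that is not $U$-poor meets $U$ in at least $n-1$ points. Since by assumption $m_0$ is the only possibly $U$-special direction, every line whose slope differs from $m_0$ has $U$-generic slope, hence is not $U$-poor, hence meets $U$ in at least $n-1$ points.

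Now fix a line $\ell$ with slope $m_0$ and assume $\ell$ is not contained in $U$; the goal is $\#(\ell\cap U)\leq p-r$. Choose a point $P\in\ell$ with $P\notin U$. The $p+1$ lines through $P$ are $\ell$ together with $p$ further lines $\ell_1,\dots,\ell_p$, each of whose slope differs from $m_0$. Because $P\notin U$, every point of $U$ lies on exactly one line through $P$, so $U$ is the disjoint union of $\ell\cap U$ and the sets $\ell_i\cap U$. Counting cardinalities and invoking the previous paragraph,
\[
np-r=\#U=\#(\ell\cap U)+\sum_{i=1}^{p}\#(\ell_i\cap U)\ \geq\ \#(\ell\cap U)+p(n-1),
\]
whence $\#(\ell\cap U)\leq p-r$, which is exactly the assertion once the excluded case $\ell\subseteq U$ is set aside.

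I do not expect a genuine obstacle here: the argument is essentially one application of the pigeonhole principle. The only places calling for a little care are the bookkeeping in the first paragraph — checking that ``not $U$-poor'' translates to ``at least $n-1$ points'' under the standing assumption $0<r<p$ — and the observation that a point $P$ outside $U$ makes the pencil of lines through $P$ into a genuine partition of $U$. Both are routine. It is perhaps worth noting that the dichotomy in the statement is real: if $\ell\subseteq U$ then $\#(\ell\cap U)=p>p-r$ since $r>0$, so that case cannot be folded into the numerical bound.
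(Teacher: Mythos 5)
Your proof is correct and follows essentially the same route as the paper: pick a point of $\ell\setminus U$, partition $U$ by the pencil of lines through it, and use that every line with slope $\neq m_0$ is not $U$-poor (hence meets $U$ in at least $n-1$ points) to conclude $\#(\ell\cap U)\leq N-p(n-1)=p-r$. Your extra bookkeeping (verifying $r>0$ and translating ``not $U$-poor'' into ``at least $n-1$ points'') just makes explicit what the paper leaves implicit.
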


 \begin{proof}
 Let $\ell_{m_0}\subseteq\plane$ be a line with slope $m_0$, not completely contained in $U$, and let $v\in\ell\setminus U$.
 By the choice of $m_0$, we notice that every line passing through $v$, other than $\ell_{m_0}$, is not $U$-special. 
 Therefore, if $\ell_m$ denotes the line through $v$ with slope $m$, we have
 \[
  N = \sum_{m\in\puno} \#(\ell_m\cap U) \geq \#(\ell_{m_0}\cap U) + p(n-1).
 \]
 This shows that $\#(\ell_{m_0}\cap U)\leq p-r$.
 \end{proof}
 We recall the definition of $c_m$ from \cref{def:cm} and the fact that $2c_m=(n-1)(np-2r)$ for all $m\in\puno\setminus\{m_0\}$ by \cref{cm:generic}. 
 Therefore we have
 \begin{equation}\label{eq:p-r=n:1}
  c_{m_0}  = \binom {N} 2 - p\cdot \frac {n-1} 2 \cdot(np-2r) 
 \end{equation}
 by \cref{cm:total}. 
 On the other hand by \cref{last:lemma} we have 
 \begin{equation}\label{eq:p-r=n:2}
  c_{m_0} \leq \rich \binom p 2  +  \frac {p-r-1} 2(N-\rich p), 
 \end{equation}
 where $\rich$ is the number of the lines (necessarily with slope $m_0$) that are contained in $U$. 
 Moreover, according to \cref{cm:lemma}, we have the equality in \cref{eq:p-r=n:2} if and only if we have $\#(\ell\cap U) \in\{0,p-r\}$ for each line $\ell$ with slope $m_0$ not contained in $U$. 
 If we compare \cref{eq:p-r=n:1} and \cref{eq:p-r=n:2}, after due simplification, we get
 \begin{equation}\label{last:estimate}
  (n-1) pr \leq \rich p r.
 \end{equation}
 This forces $\rich=n-1$, because $\rich p\leq \# U< np$ and $r\neq 0$. 
 Consequently, we have equality in \cref{last:estimate}, and so also in  \cref{eq:p-r=n:2}.  
 This is only possible if $U$ consists of the union of $n-1$ parallel lines along $m_0$, plus $p-r$ additional points on another line with same slope. 
 In particular we have that $U$ is contained in the union of $n$ lines, as we wanted to prove.  
 
 \subsection*{Acknowledgements}
 
 This work was supported by the full International Scholarship awarded by the Faculty of Graduate and Postdoctoral Studies (University of Ottawa, Canada).

\bibliography{biblio_rich_directions_v2}

\end{document}